\documentclass{amsart}

\usepackage{amsmath,amsthm,amssymb,amscd}

\newtheorem{prop}{Proposition}[section]
\newtheorem{theorem}[prop]{Theorem}
\newtheorem{lemma}[prop]{Lemma}
\newtheorem{corr}[prop]{Corollary}
\theoremstyle{definition}
\newtheorem{definition}[prop]{Definition}
\theoremstyle{remark}
\newtheorem{remark}[prop]{Remark}
\newtheorem{example}[prop]{Example}

\DeclareMathOperator{\Prim}{Prim}

\DeclareMathOperator{\supp}{supp}
\DeclareMathOperator{\ran}{ran}

\DeclareMathOperator{\Ind}{Ind}
\DeclareMathOperator{\Ex}{Ex}

\newcommand{\mcal}[1]{\mathcal{#1}}

\newcommand{\lset}[1]{{}_{#1}}
\newcommand{\inv}{^{-1}}
\newcommand{\llangle}{\langle\!\langle}
\newcommand{\rrangle}{\rangle\!\rangle}
\newcommand{\spn}{\text{span}}
\newcommand{\cspn}{\overline{\text{span}}}
\newcommand{\sidehat}{^{\wedge}}

\newcommand{\unit}{^{(0)}}

\allowdisplaybreaks

\title[The Mackey Machine for Groupoid Crossed Products]{The Mackey Machine
  for Crossed Products by Regular Groupoids. I}
\author{Geoff Goehle}
\address{Mathematics and Computer Science Department, Stillwell 426,
  Western Carolina University, Cullowhee, NC 28723}
\email{grgoehle@email.wcu.edu}
\subjclass[2010]{47L65,47A67}

\begin{document}

\begin{abstract}
We first describe a Rieffel induction system for groupoid crossed
products. We then use this induction system to show that, given a
regular groupoid $G$ and a dynamical system $(A,G,\alpha)$, every
irreducible representation of $A\rtimes G$ is induced from a
representation of the group crossed product $A(u)\rtimes S_u$ where
$u\in G\unit$, $A(u)$ is a fibre of $A$, and $S_u$ is a stabilizer
subgroup of $G$. 
\end{abstract}

\maketitle

\section*{Introduction}

The term ``Mackey Machine'' generally refers to a program for
extracting information about the representation theory of a group, group
action, or dynamical system using the representation theory of certain
subsystems as well as some sort of process for inducing
representations.   The goal of such a program is to, under suitable
hypotheses, identify the topology on the primitive ideal space, or
spectrum, of an associated universal $C^*$-algebra (e.g. the group
$C^*$-algebra).  These notions have been applied quite successfully in many
different contexts.  There is the original work done by Mackey
\cite{mackey1, mackey2} and then later generalized by Fell and Doran
\cite{felldoran}, as well as Takesaki's research \cite{takesaki}
on group crossed products.  In terms of methodology, however, our approach will
be closer to that of Rieffel, who recast Mackey's theory using Hilbert
modules and Morita equivalence \cite{rieffelinduce, rieffelunit}.  This
technique was then applied to crossed products by Green \cite{lscovalg} and
Echterhoff \cite{ectcrossed}.  More recently, work has continued on 
the Mackey Machine for group crossed products in
\cite{inducprimide} as well as for groupoid $C^*$-algebras in
\cite{irredreps,geneffhan}.   In this paper, however, we will develop 
the Mackey Machine for groupoid crossed products.  

In general, when implementing the Mackey Machine for a
groupoid dynamical system $(A,G,\alpha)$ one must choose between two
natural classes of subgroups.  The first is the set of
stabilizer subgroups for the action of the groupoid $G$ on its unit
space $G\unit$.  The second is the set of stabilizer subgroups for the
action of the groupoid $G$ on the primitive ideal space of the
$C^*$-algebra $A$.  This series studies the irreducible representations
of $A\rtimes G$ using the {\em former} class, and in particular using the
stabilizer subgroupoid $S$ of $G$.  Our eventual goal will be to
use induction to identify the spectrum of $A\rtimes_\alpha G$ as a 
quotient of the spectrum of $A\rtimes_{\alpha} S$.  This falls
under the ideology of the Mackey Machine because, as a
set, $(A\rtimes_{\alpha} S)\sidehat$ is equal to the disjoint union
$\coprod A(u)\rtimes S_u$, where the $A(u)$ are the fibres of $A$ and
the $S_u$ are the stabilizer subgroups associated to the action of $G$
on its unit space.  Our first step towards this goal will be to
demonstrate an induction technique for groupoid crossed products.  The
construction of induced representations contained in this paper 
is very general and
extends ``Rieffel''-type induction for groups, groupoids, 
and group crossed products \cite{rieffelinduce,irredreps,lscovalg}.
After induction, the next step in developing the Mackey Machine is to
prove that every irreducible representation of the system is induced
from a representation of a substructure.  Specifically we
show that every irreducible representation of $A\rtimes
G$ is induced from a representation of a ``stabilizer'' crossed
product $A(u)\rtimes S_u$ for some $u\in G\unit$.  This result is
similar in spirit to the Gootman-Rosenberg-Sauvageot Theorem or the
results of \cite{geneffhan}, but is more closely related to
\cite{cpsmooth} since we make use of a regularity hypothesis.
The structure of this paper is roughly as follows. Section
\ref{sec:group-cross-prod} covers some basic crossed product theory
followed by the induction results in Section \ref{sec:induction}.  Section
\ref{sec:basic-constructions} contains some basic constructions which
will be needed in Section \ref{sec:stab-t_0-orbits} where we prove the
surjectivity result described above.  

Before we begin in earnest we should first make some remarks about our
hypotheses.  Because we are looking to identify the spectrum of a
$C^*$-algebra we will need some sort of ``Type I'' or ``regularity''
condition.  Specifically, we will need to assume that the groupoid $G$
satisfies the conditions of the Mackey-Glimm Dichotomy
\cite{groupoiddichotomy}.  We will refer to such groupoids as {\em
  regular} groupoids.  
Of course, the assumption that $G$ is regular could be easily replaced
by any of the fourteen equivalent conditions in the Mackey-Glimm
Dichotomy, including assuming that $G\unit/G$ is $T_0$ or that $G\unit/G$ is
almost Hausdorff.  Finally, it should be noted that the results of
this paper are contained, with more detail and a great deal of
background material, in the author's thesis \cite{mythesis}.

\section{Groupoid Crossed Products}
\label{sec:group-cross-prod}

Throughout the paper we will let $G$ denote a second countable,
locally compact Hausdorff groupoid with a Haar system $\lambda$.
Given an element $u\in G\unit$ of the unit space of $G$ we
will use $S_u=\{\gamma\in G : s(\gamma)=r(\gamma)=u\}$ 
to denote the stabilizer, or isotropy, subgroup of $G$
over $u$.  Since groupoids act on fibred objects, we must consider the
theory of $C_0(X)$-algebras and upper-semicontinuous $C^*$-bundles
(usc-bundles) if we are to work with groupoid dynamical systems.  It
is assumed that the reader is familiar with $C_0(X)$-algebras and
their associated bundles, although either \cite[Appendix C]{tfb2} or
\cite[Section 3.1]{mythesis} will serve as a reference.  
Throughout the rest of the paper we will let $A$ denote a separable
$C_0(G\unit)$-algebra and will let $\mcal{A}$ be its associated
usc-bundle.  We use $\Gamma_0(G\unit,\mcal{A})$ to denote the
space of sections of $\mcal{A}$ which vanish at infinity and identify
this space with $A$ in the usual fashion.  

\begin{remark}
Because it will be fundamental to what follows, let us recall the
``usual'' method of identifying a $C_0(X)$-algebra $A$ with
$\Gamma_0(X,\mcal{A})$.  Given $x\in X$ we let 
\[
I_x = \cspn\{\phi\cdot a : \phi\in C_0(X), a\in A, \phi(x)=0\}
\]
and define $A(x):= A/I_x$.  We denote the image of $a$ in $A(x)$ by
$a(x)$.  Then \cite[Theorem C.26]{tfb2} implies that $\mcal{A}=
\coprod_{x\in X} A(x)$ has an usc-bundle structure and that the map
sending $a$ to $x\mapsto a(x)$ is an isomorphism of $A$ onto
$\Gamma_0(X,\mcal{A})$.  
\end{remark}

We take this opportunity to establish some basic facts about
$C_0(X)$-algebras which we shall need in the next section. Given a
$C_0(X)$-algebra $A$, its associated usc-bundle $p:\mcal{A}\rightarrow
X$ and $Y$ a locally compact Hausdorff subset of $X$, we define
$\mcal{A}|_Y:=p\inv(Y)$.  Furthermore, we use $A(Y)$ to denote
$\Gamma_0(Y,\mcal{A}|_Y)$, which we shall identify with 
$\Gamma_0(Y,\mcal{A})$.   
Next, if $\tau:Y\rightarrow X$ is a continuous map then we let $\tau^*
\mcal{A}$ denote the pullback of $\mcal{A}$ by $\tau$.  In other words
$\tau^*\mcal{A} = \{(y,a)\in Y\times\mcal{A}:\tau(y) = p(a)\}$.  
In a slight
break from the standard notation, we will use $\tau^*A$ to denote the
section algebra $\Gamma_0(Y,\tau^*\mcal{A})$.  Of course, when $\tau$ is
surjective this can be identified with $C_0(Y)\otimes_{C_0(X)} A$,
which is the the usual notion of the
pull-back of a $C_0(X)$-algebra \cite[Proposition 1.3]{pullback}.
When $\tau$ is not surjective, $\tau^*A$ can be identified with $C_0(Y)\otimes_{C_0(\ran\tau)} A(\ran \tau)$.  

\begin{remark}
An element $f \in \tau^*\mcal{A}$ is technically of the form $f(y) =
(y,\tilde{f}(y))$ where $\tilde{f}$ is a continuous map from $Y$ into
$\mcal{A}$ such that $p(\tilde{f}(y))=\tau(y)$ for all $y\in Y$.
However, we rarely make a distinction between $f$ and $\tilde{f}$
and often confuse the two. 
\end{remark}

One of the important facts about pull backs is the following

\begin{prop}
\label{prop:9}
Suppose $X$ and $Y$ are locally compact Hausdorff spaces, $A$ is a
$C_0(X)$-algebra, and $\tau:Y\rightarrow X$ is a continuous map.
Given $f\in C_c(Y)$ and $a\in A$ define $f\otimes a(y) =
f(y)a(\tau(y))$ for all $y\in Y$.  Then $f\otimes a\in
\Gamma_c(Y,\tau^*\mcal{A})$ and 
\[
C_c(Y)\odot A := \spn\{f\otimes a : f\in C_c(Y), a\in A\}
\]
is dense in $\tau^*A$.  Whats more, $C_c(Y)\odot A$ is dense in
$\Gamma_c(Y,\tau^*\mcal{A})$ with respect to the inductive limit
topology. 
\end{prop}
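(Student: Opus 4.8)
The plan is to first verify that $f\otimes a$ is a genuine element of $\Gamma_c(Y,\tau^*\mcal{A})$, and then to obtain both density assertions from a single Stone--Weierstrass-type argument for sections of an usc-bundle, deducing the $\tau^*A$ statement as a corollary of the inductive-limit statement.

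For the first assertion, recall that under our standing identification an element $a\in A$ corresponds to the continuous section $x\mapsto a(x)$ of $\mcal{A}$. Hence $y\mapsto a(\tau(y))$ is the composite of the continuous map $\tau$ with a continuous section, so it is a continuous map $Y\to\mcal{A}$ taking values in the fibre $A(\tau(y))$ over each $y$. Multiplying by the scalar $f(y)$ and using that scalar multiplication $\mathbb{C}\times\mcal{A}\to\mcal{A}$ is continuous (a bundle axiom) and preserves fibres, I conclude that $y\mapsto(y,f(y)a(\tau(y)))$ is a continuous section of $\tau^*\mcal{A}$. Its support is contained in $\supp f$, which is compact, so $f\otimes a\in\Gamma_c(Y,\tau^*\mcal{A})$, and therefore $C_c(Y)\odot A\subseteq\Gamma_c(Y,\tau^*\mcal{A})$.

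For the inductive-limit density I would invoke the standard fibrewise-density criterion for sections of an usc-bundle (cf. the Appendix C machinery of \cite{tfb2}): a subspace $\Phi\subseteq\Gamma_c(Y,\tau^*\mcal{A})$ that is closed under multiplication by $C_c(Y)$ and satisfies $\{s(y):s\in\Phi\}$ dense in the fibre $(\tau^*\mcal{A})(y)=A(\tau(y))$ for every $y\in Y$ is automatically dense in $\Gamma_c(Y,\tau^*\mcal{A})$ in the inductive limit topology. That $C_c(Y)\odot A$ is a $C_c(Y)$-module is immediate from the identity $g\cdot(f\otimes a)=(gf)\otimes a$. For the fibrewise condition, fix $y_0\in Y$ and choose $f\in C_c(Y)$ with $f(y_0)=1$; then $(f\otimes a)(y_0)=a(\tau(y_0))$ ranges over all of $A(\tau(y_0))$ as $a$ ranges over $A$, since $a\mapsto a(\tau(y_0))$ is precisely the surjective quotient map $A\to A(\tau(y_0))$. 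Thus $\Phi=C_c(Y)\odot A$ hits each entire fibre, the criterion applies, and $C_c(Y)\odot A$ is dense in $\Gamma_c(Y,\tau^*\mcal{A})$ in the inductive limit topology.

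The density in $\tau^*A=\Gamma_0(Y,\tau^*\mcal{A})$ then follows formally. Convergence in the inductive limit topology dominates uniform convergence, so the preceding paragraph already shows $C_c(Y)\odot A$ is sup-norm dense in $\Gamma_c(Y,\tau^*\mcal{A})$; since $\Gamma_c$ is sup-norm dense in $\Gamma_0$, the density in $\tau^*A$ is immediate. I expect the main obstacle to be essentially bookkeeping rather than conceptual: the fibrewise-density theorem is the engine, so the real care lies in confirming that its hypotheses (continuity of $f\otimes a$ and the module and surjectivity properties) are genuinely verified in the upper-semicontinuous rather than continuous setting, where one must avoid tacitly using continuity of the norm.
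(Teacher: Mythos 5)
Your proof is correct and follows essentially the same route as the paper: both rest on the fibrewise Stone--Weierstrass criterion of \cite[Proposition C.24]{tfb2} applied to the $C_c(Y)$-submodule $C_c(Y)\odot A$, whose fibrewise density comes from the surjective quotient maps $a\mapsto a(\tau(y))$ onto $A(\tau(y))$. The only difference is one of order --- the paper first obtains uniform density from Proposition C.24 and then cuts down supports using the $C_0(Y)$-action to get inductive-limit convergence, whereas you invoke the criterion directly in an inductive-limit form; if your cited criterion only yields uniform density, the same explicit support-cutting step is needed, and your observation that $C_c(Y)\odot A$ is a $C_c(Y)$-module already supplies it.
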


\begin{proof}
The statements about $f\otimes a$ are trivial, and it follows in a
straightforward manner from \cite[Proposition C.24]{tfb2} that
$C_c(Y)\odot A$ is dense in $\tau^*A$.  Given $f\in
\Gamma_c(Y,\tau^*\mcal{A})$ recall that a sequence $f_i$ converges to $f$
with respect to the inductive limit topology if it converges to $f$
uniformly and the $\supp f_i$ are all eventually contained in some
compact set $K$.  We know that there exists $f_i\in C_c(Y)\odot A$
such that $f_i\rightarrow f$ uniformly.  Since $C_c(Y)\odot A$ is
closed under the action of $C_0(Y)$, it is a simple matter to
cut down the $f_i$ so that they are supported on a compact
neighborhood of $\supp f$.  
\end{proof}

Now, given $A$ and $G$ as above
we let $\alpha$ denote an action of $G$ on $A$ as defined in
\cite[Definition 4.1]{renaultequiv}.  Recall that this means $\alpha$
is given by a collection of isomorphisms $\{\alpha_\gamma\}_{\gamma\in
  G}$ such that $\alpha_\gamma :A(s(\gamma))\rightarrow A(r(\gamma))$,
$\alpha_{\gamma\eta} = \alpha_\gamma\circ\alpha_\eta$ whenever
$s(\gamma)=r(\eta)$, and that $\gamma\cdot a \mapsto \alpha_\gamma(a)$
is a continuous action of $G$ on $\mcal{A}$.  Given a groupoid
dynamical system $(A,G,\alpha)$ consisting of $A$, $G$, and $\alpha$ as
above, we 
construct the groupoid crossed product $A\rtimes_\alpha G$ as in
\cite[Section 4]{renaultequiv} or \cite[Chapter 3]{mythesis}.  Without
going into details we will mention that $A\rtimes G$ is a universal
completion of the algebra of compactly supported sections
$\Gamma_c(G,r^*\mcal{A})$, where $r^*\mcal{A}$ denotes the pull back
of $\mcal{A}$ by the range map, given the convolution and involution
operations
\[
f*g(\gamma) = \int_G f(\eta)\alpha_\eta(g(\eta\inv\gamma))
d\lambda^{r(\gamma)}(\eta) \quad\text{and}\quad
f^*(\gamma) = \alpha_\gamma(g(\gamma\inv)^*).
\]

What is, perhaps, more motivating than the details of the construction
of the crossed product is the following example.  

\begin{example}
\label{ex:haus}
Suppose $A$ is a $C^*$-algebra with Hausdorff spectrum $\widehat{A}$
and recall that we can view $A$ as a $C_0(\widehat{A})$-algebra with
fibres $A/\ker\pi$.  Let $\mcal{A}$ be the corresponding usc-bundle.
Given a locally compact Hausdorff group $H$ and an action $\alpha$ of
$H$ on $A$ recall that there is an induced action of $H$ on
$\widehat{A}$ given by $s\cdot \pi = \pi\circ\alpha_s\inv$.  Let $G$
be the corresponding transformation groupoid $G = H\times
\widehat{A}$.  Now, given $(s,\pi)\in G$ define $\beta_{(s,\pi)}:
A(s\inv\cdot \pi)\rightarrow A(\pi)$ by 
\[
\beta_{(s,\pi)}(a(s\inv\cdot \pi)) = \alpha_s(a)(\pi).
\]

A straightforward computation shows that $\beta_{(s,\pi)}$ is a well
defined isomorphism.  It is similarly easy to prove that
$\beta_{(s,\pi)(t,\rho)} = \beta_{(s,\pi)}\circ\beta_{(t,\rho)}$
whenever $s\inv\cdot \pi = \rho$.  The last thing we need to do
to show that $\beta$ is a groupoid action is to prove it is
continuous.  This is, again, straightforward as long as you apply
\cite[Proposition C.20]{tfb2}.  Since $(A,G,\beta)$ is a groupoid
dynamical system, we can form the crossed product $A\rtimes_\beta G$.
We claim that this is naturally isomorphic to the group crossed
product $A\rtimes_\alpha H$.  First, recall that $A\rtimes_\alpha H$ is
a universal completion of the function algebra $C_c(H,A)$.  Now, given
$f\in \Gamma_c(G,r^*\mcal{A})$ define $\Phi(f)(s)(\pi) = f(s,\pi)$
for $s\in H$ and $\pi\in \widehat{A}$.  It follows quickly that
$\Phi(f)(s)$ defines an element of $A$ and that $\Phi(f)$ is a
continuous compactly supported function on $H$.  Thus
$\Phi:\Gamma_c(G,r^*\mcal{A})\rightarrow C_c(H,A)$ and simple
calculations show that $\Phi$ is a $*$-homomorphism which is
continuous with respect to the inductive limit topology.  It is now an
immediate result of Renault's Disintegration Theorem \cite[Theorem
7.8,7.12]{renaultequiv} that $\Phi$ extends to a
homomorphism from $A\rtimes G$ into $A\rtimes H$.  The goal is to show
that this extension is an isomorphism.  

Let $D = \Gamma_c(\widehat{A},\mcal{A})$ and observe that $D$ is dense
in $A$.  Consider the set of sums of elementary tensors $C_c(H)\odot
D$ and recall \cite[Lemma 1.87]{tfb2} that this set is
dense in $C_c(H,A)$ with respect to the inductive limit
topology. Given $\phi\otimes a \in C_c(H)\odot D$  define $f(s,\pi) =
\phi(s)a(\pi)$ and observe that $f\in \Gamma_c(G,r^*\mcal{A})$.
Clearly $\Phi(f) = \phi\otimes a$ so that $C_c(H)\odot D \subset \ran
\Phi$.  Hence $\ran\Phi$ is dense with respect to the
inductive limit topology and therefore with respect to the $I$-norm.  
In order to show that $\Phi$ is
an isomorphism we exhibit the following inverse.  First, observe that $\Phi$
is bijective on $\Gamma_c(G,r^*\mcal{A})$ so that we can define an
inverse map $\Psi =
\Phi\inv:\ran\Phi\rightarrow\Gamma_c(G,r^*\mcal{A})$ 
by $\Psi(f)(s,\pi) = f(s)(\pi)$.  
It is easy to show that $\Psi$ is a $*$-homomorphism.
Straightforward calculations then show that $\Psi$ is $I$-norm decreasing.
Since $\ran\Phi$ is dense in $C_c(H,A)$ with respect to the $I$-norm,
we can extend $\Psi$ to $C_c(H,A)$.  This extension is still an
$I$-norm decreasing $*$-homomorphism on $C_c(H,A)$ and it follows from
\cite[Corollary 2.47]{tfb2} that $\Psi$ extends to all of $A\rtimes_\alpha
H$.  Finally, since $\Psi$ and $\Phi$ are inverses on dense subsets,
it follows that they are inverses everywhere and that $A\rtimes H$ is
isomorphic to $A\rtimes G$. 
\end{example}

The reason we went through the trouble of describing this example in detail is
that it is useful to think of groupoid crossed products as
generalizing group crossed products in this manner.  For instance,
in the case where $A$ has Hausdorff spectrum the results of
Section \ref{sec:stab-t_0-orbits} can be seen as generalizing
corresponding results for group crossed products when the
generalization is done via Example \ref{ex:haus}.  
On the other hand, when we view groups
as groupoids with a single unit the results of Section
\ref{sec:stab-t_0-orbits} become trivial.

\section{Induction}
\label{sec:induction}

The goal for this section is to show that if we are given a groupoid
dynamical system $(A,G,\alpha)$ and a closed subgroupoid $H$ of $G$
with its own Haar system then there is an induction process which
takes representations of $A\rtimes_\alpha H$ and creates
representations of $A\rtimes_\alpha G$.  This induction process has
its roots in the Rieffel correspondence and it is assumed that the
reader is familiar with representations induced via imprimitivity
bimodules as described in \cite[Chapters 2,3]{tfb}.  We will apply
this theory to groupoid crossed products by constructing a right
Hilbert $A\rtimes_\alpha H$-module $\mcal{Z}_H^G$ and proving the
following theorem. 

\begin{theorem}
\label{thm:induce}
Suppose $(A,G,\alpha)$ is a groupoid dynamical system and
that $H$ is a closed subgroupoid of $G$ with a Haar system.  Then
given a representation $R$ of $A(H\unit)\rtimes_{\alpha|_H} H$ on
$\mcal{H}$ we may form the induced representation $\Ind_H^G R$ of
$A\rtimes_\alpha G$ on $\mcal{Z}_H^G \otimes_{A\rtimes H} \mcal{H}$
which is defined for $f\in \Gamma_c(G,r^*\mcal{A})$, $z\in\mcal{Z}_0$
and $h\in\mcal{H}$ by 
\[
\Ind_H^G R(f)(z\otimes h) = f\cdot z \otimes h
\]
where 
\begin{equation}
\label{eq:1}
f\cdot z (\gamma) = \int_G \alpha_\gamma\inv(f(\eta))z(\eta\inv\gamma)
d\lambda^{r(\gamma)}(\eta).
\end{equation}
\end{theorem}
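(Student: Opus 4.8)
The plan is to realize $\mcal{Z}_H^G$ as the completion of $\mcal{Z}_0=\Gamma_c(G,s^*\mcal{A})$ as a right Hilbert module over $A(H\unit)\rtimes_{\alpha|_H}H$ (which I abbreviate $A\rtimes H$), to check that \eqref{eq:1} defines a left action of $\Gamma_c(G,r^*\mcal{A})$ on $\mcal{Z}_0$ by adjointable operators, and then to obtain $\Ind_H^G R$ from the internal tensor product $\mcal{Z}_H^G\otimes_{A\rtimes H}\mcal{H}$ via the general machinery of \cite[Chapters 2,3]{tfb}. I would first give $\mcal{Z}_0$ the natural right convolution action of $\Gamma_c(H,r^*\mcal{A})$ together with the $A\rtimes H$-valued inner product
\[
\langle z,w\rangle_{A\rtimes H}(\eta)=\int_G z(\gamma\inv)^*\,\alpha_\eta\big(w(\gamma\inv\eta)\big)\,d\lambda^{r(\eta)}(\gamma),\qquad \eta\in H,
\]
whose precise $\alpha$-twisting is dictated by the convolution and involution on $A\rtimes H$. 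One checks that $\langle z,w\rangle_{A\rtimes H}\in\Gamma_c(H,r^*\mcal{A})$ and depends continuously on $z,w$ in the inductive limit topology (here Proposition \ref{prop:9} and a partition-of-unity argument reduce everything to elementary tensors), that the form is conjugate symmetric and right $A\rtimes H$-linear, and that the right action is an algebra homomorphism. Each identity unwinds, via Fubini's theorem and the left/right invariance of $\lambda$ and the Haar system on $H$, to a change of variables in the groupoid.

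The main obstacle is \emph{positivity}: that $\langle z,z\rangle_{A\rtimes H}\ge 0$ in $A\rtimes H$, which is exactly what is needed to complete $\mcal{Z}_0$ to a genuine right Hilbert module $\mcal{Z}_H^G$. I would prove this by testing against an arbitrary representation $R$ of $A\rtimes H$ and invoking Renault's Disintegration Theorem \cite[Theorems 7.8, 7.12]{renaultequiv} to write $R$ as the integrated form of a representation of the system over a quasi-invariant measure $\mu$ on $H\unit$. Substituting this decomposition into $\big(R(\langle z,z\rangle_{A\rtimes H})\xi\mid\xi\big)$ and rearranging the resulting integrals with Fubini collapses the expression to an integral of the squared norm of a single vector-valued section built from $z$ and $\xi$, which is manifestly nonnegative. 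Since $R$ was arbitrary this forces $\langle z,z\rangle_{A\rtimes H}\ge0$, and the completion $\mcal{Z}_H^G$ is then a right Hilbert $A\rtimes H$-module.

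Next I would turn to the left action. Because $z\in\Gamma_c(G,s^*\mcal{A})$, a fibrewise bookkeeping shows that $f\cdot z$ defined by \eqref{eq:1} again lies in $\Gamma_c(G,s^*\mcal{A})=\mcal{Z}_0$, with the requisite continuity and compact support supplied by Proposition \ref{prop:9}. The three algebraic facts to establish are that $f\mapsto (z\mapsto f\cdot z)$ is multiplicative, $(f*g)\cdot z=f\cdot(g\cdot z)$, that it is formally adjoint to the involution, $\langle f\cdot z,w\rangle_{A\rtimes H}=\langle z,f^*\cdot w\rangle_{A\rtimes H}$, and that it is compatible with the right action, $f\cdot(z\cdot b)=(f\cdot z)\cdot b$; all three are Fubini-and-change-of-variables computations using the definition of the inner product and the invariance of the Haar systems.

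Finally, the adjoint relation alone does not make the left action bounded over a pre-Hilbert module, so I would obtain boundedness exactly as in Example \ref{ex:haus}. Passing to the dense subspace $\mcal{Z}_0\odot\mcal{H}$ of $\mcal{Z}_H^G\otimes_{A\rtimes H}\mcal{H}$, the operators $f\mapsto \Ind_H^G R(f)$ assemble into a representation of the $*$-algebra $\Gamma_c(G,r^*\mcal{A})$ that is continuous in the inductive limit topology: the adjoint relation of the previous paragraph shows it descends to the quotient and is symmetric, while multiplicativity makes it a homomorphism. Renault's Disintegration Theorem \cite[Theorems 7.8, 7.12]{renaultequiv} then guarantees that this representation is bounded by the universal norm and extends to all of $A\rtimes_\alpha G$. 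The extension is the desired $\Ind_H^G R$, and by construction it agrees with the internal-tensor-product representation furnished by \cite[Chapters 2,3]{tfb}, completing the proof.
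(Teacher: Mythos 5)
Your route is genuinely different from the paper's. You construct the right Hilbert $A(H\unit)\rtimes_{\alpha|_H}H$-module by hand, proving positivity of the inner product by disintegrating an arbitrary representation of $A\rtimes H$ and collapsing the resulting integrals to a square. The paper never proves positivity directly: it sets $\mcal{Z}_0=\Gamma_c(X,s^*\mcal{A})$ with $X=s\inv(H\unit)$, exhibits the pullback $s_X^*\mcal{A}$ as an equivalence bundle over the $(G^H,H)$-equivalence $X$, and invokes Renault's Equivalence Theorem, which delivers $\mcal{Z}_H^G$ as a full imprimitivity bimodule with $\mcal{K}(\mcal{Z}_H^G)\cong\rho^*A\rtimes_\sigma G^H$; it then embeds $A\rtimes G$ into $M(\rho^*A\rtimes_\sigma G^H)=\mcal{L}(\mcal{Z}_H^G)$ and quotes \cite[Proposition 2.66]{tfb}. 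What the paper's route buys is that positivity, the quotient by null vectors, and the identification of the compacts come for free, at the cost of introducing $G^H$ and $\sigma$; what your route buys is that it avoids the imprimitivity groupoid entirely, but the ``Fubini collapses to a square'' step is precisely the delicate part of the proof of the Equivalence Theorem (quasi-invariant measures, modular functions), so you are re-proving a special case of it. Your use of $\Gamma_c(G,s^*\mcal{A})$ in place of $\Gamma_c(X,s^*\mcal{A})$ is harmless (sections supported off $s\inv(H\unit)$ are null vectors and die in the completion), and your inner product agrees with the paper's after the substitution $\gamma\mapsto\gamma\inv\eta$.

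There is, however, one genuine gap in the final step. Renault's Disintegration Theorem does not apply to an arbitrary symmetric, inductive-limit-continuous $*$-homomorphism of $\Gamma_c(G,r^*\mcal{A})$ into operators on a dense subspace: it also requires the totality hypothesis that $\spn\{\Ind_H^GR(f)(z\otimes h)\}=\spn\{f\cdot z\otimes h\}$ be dense in $\mcal{Z}_H^G\otimes_{A\rtimes H}\mcal{H}$, which in turn requires that $\{f\cdot z: f\in\Gamma_c(G,r^*\mcal{A}),\ z\in\mcal{Z}_0\}$ have dense span in $\mcal{Z}_H^G$. This is not formal. It is exactly the content of Lemma~\ref{lem:4} (transported through the bimodule), whose proof occupies over a page: one must use \cite[Proposition 6.8]{renaultequiv} to produce sections $e_\kappa$ supported in shrinking conditionally compact neighborhoods of $G\unit$ with controlled $\lambda^u$-integrals, and then establish the equicontinuity estimate $\|\alpha_\xi(g(\xi\inv\gamma,\eta))-g(\gamma,\eta)\|<\epsilon$ uniformly for $\xi$ near $G\unit$ together with the fibrewise approximate-identity estimate, in order to show $e_\kappa\cdot z\to z$ in the inductive limit topology. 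Without this, the representation you feed to the Disintegration Theorem could a priori live on a proper closed subspace, and the bound $\|\Ind_H^GR(f)\|\le\|f\|$ on all of $\mcal{Z}_0\odot\mcal{H}$ would not follow. The rest of your outline is sound, but this nondegeneracy argument must be supplied explicitly.
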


Before we prove this theorem we must construct the Hilbert module
$\mcal{Z}_H^G$, which we will do by using Renault's Equivalence
Theorem to build an imprimitivity bimodule.  First, 
however, we need to construct a new action from $\alpha$
and to do this we must back up even further and discuss
groupoid equivalence and imprimitivity groupoids.  We assume that the
reader is familiar with these notions, although \cite{groupoidequiv}
and \cite{tfb} will serve as references.  

\begin{definition}
Suppose $G$ is a locally compact Hausdorff groupoid
and $H$ is a closed subgroupoid with a Haar
system.  Let $X = s\inv(H\unit)$.  
We define the {\em imprimitivity groupoid} $G^H$ to be the
quotient of $X*X = \{(\gamma,\eta) : s(\gamma) =
s(\eta)\}$ by $H$ where $H$ acts diagonally via right translation.  We
give $G^H$ a groupoid structure with the operations
\[
[\gamma,\eta][\eta,\zeta] = [\gamma,\zeta],\quad\text{and}\quad
[\gamma,\eta]\inv = [\eta,\gamma].
\]
\end{definition}

The key facts we need concerning these objects are contained in the
following

\begin{prop}
\label{prop:3}
Suppose $G$ is a locally compact Hausdorff groupoid
and $H$ is a closed subgroupoid with a Haar
system.  Then $G^H$ is a locally compact Hausdorff
groupoid with unit space $X/H$ and Haar system 
$\{\mu^{[\xi]}\}$ defined for $f\in C_c(G^H)$ by 
\begin{equation}
\label{eq:2}
\int_{G^H} f([\zeta,\eta])d\mu^{[\xi]}([\zeta,\eta]) = 
\int_G f([\xi,\eta])d\lambda_{s(\xi)}(\eta).
\end{equation}
Furthermore, $X$ is a $(G^H,H)$-equivalence where $H$ acts on $X$ by
right translation and $G^H$ acts via the operation 
$[\gamma,\eta]\cdot \eta = \gamma$.  
\end{prop}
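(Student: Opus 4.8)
The plan is to extract the entire groupoid structure, Haar system, and equivalence from a single underlying fact: the right $H$-action on $X=s\inv(H\unit)$ is free and proper. I would first check freeness, which is immediate from cancellation in $G$, since $\gamma h=\gamma$ forces $h=s(\gamma)\in H\unit$. For properness, note that $X$ is closed in $G$ because $H\unit$ is closed in $G\unit$; given a compact set $C\subseteq X\times X$, any $(\gamma,h)$ with $(\gamma,\gamma h)\in C$ satisfies $h=\gamma\inv(\gamma h)$, so $h$ lies in the image under multiplication of a compact subset of the composable pairs, intersected with the closed set $H$. Hence the preimage of $C$ under the action map is compact. The diagonal $H$-action on $X*X$ is free and proper for the same reasons, so by the standard theory of free proper groupoid actions \cite{groupoidequiv,tfb} both $X/H$ and $G^H=(X*X)/H$ are locally compact Hausdorff, the orbit maps are open, and the division map sending $(\xi,\xi h)$ to $h$ is continuous.

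With these tools in hand, freeness handles well-definedness of the algebraic operations and properness handles their continuity. Setting $r([\gamma,\eta])=[\gamma]$ and $s([\gamma,\eta])=[\eta]$, I would show that two classes $[\gamma,\eta]$ and $[\mu,\nu]$ are composable exactly when $[\eta]=[\mu]$; writing $\mu=\eta h$ for the unique such $h\in H$ and using $[\mu,\nu]=[\eta,\nu h\inv]$, the product becomes $[\gamma,\nu h\inv]$, which agrees with the stated formula and is independent of representatives by freeness. Continuity of multiplication then follows from continuity of the division map. The inverse map is induced by the (continuous) coordinate swap on $X*X$, and the units are exactly the classes $[\gamma,\gamma]$, giving a homeomorphism $X/H\cong(G^H)\unit$ via $[\gamma]\mapsto[\gamma,\gamma]$.

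For the Haar system I would verify the four requirements in turn. Well-definedness, i.e. independence of the representative $\xi$ of $[\xi]$, follows from the right-invariance of $\{\lambda_u\}$: replacing $\xi$ by $\xi h$ and using $[\xi h,\eta]=[\xi,\eta h\inv]$ converts \eqref{eq:2} into the same integral after the substitution $\eta\mapsto\eta h\inv$. The support of $\mu^{[\xi]}$ is $r\inv([\xi])$ since $\eta\mapsto[\xi,\eta]$ parametrizes that fibre, and continuity of $[\xi]\mapsto\int f\,d\mu^{[\xi]}$ follows from continuity of the $G$-Haar system together with openness of the quotient map. Left invariance is then essentially forced by the defining formula: for composable $[\sigma,\tau]$ one computes
\[
\int_{G^H} f([\sigma,\tau][\tau,\eta])\,d\lambda_{s(\tau)}(\eta)=\int_G f([\sigma,\eta])\,d\lambda_{s(\sigma)}(\eta),
\]
using $s(\tau)=s(\sigma)$, which is exactly $\int f\,d\mu^{r([\sigma,\tau])}$.

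Finally I would assemble the equivalence. The left $G^H$-action $[\gamma,\eta]\cdot(\eta h)=\gamma h$ has moment map the quotient $q\colon X\to X/H=(G^H)\unit$, the right $H$-action is right translation with moment map $s\colon X\to H\unit$, and the two commute by a direct check. Freeness of the $G^H$-action again comes from cancellation in $G$, and its properness either by the argument above or by transport through the $H$-equivalence. The moment map $s$ identifies the $G^H$-orbit of $\xi$ with $s\inv(s(\xi))$, giving $G^H\backslash X\cong H\unit$, while $X/H\cong(G^H)\unit$ holds by construction; this is the content of the equivalence. I expect the main obstacle to be properness: freeness and the algebraic identities are routine, but it is properness of the $H$-action (and hence of the $G^H$-action) that is responsible for the Hausdorffness of the quotients and for the continuity of both the multiplication on $G^H$ and the division map, and so it underlies every nontrivial part of the statement.
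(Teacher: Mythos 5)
Your proposal is correct, but it takes a genuinely different route from the paper, which offers no argument at all: the paper's ``proof'' of Proposition \ref{prop:3} is a two-sentence citation, deferring the construction of $G^H$ and the $(G^H,H)$-equivalence to \cite[Section 2]{groupoidequiv} and the verification that $\mu$ is a Haar system to \cite[Section 2]{irredreps}. What you have written is essentially the content of those references, reorganized around a single principle: the right $H$-action on $X=s\inv(H\unit)$ (and diagonally on $X*X$) is free and proper, and everything else --- Hausdorffness and local compactness of the quotients, openness of the orbit maps, continuity of the division map and hence of multiplication on $G^H$, well-definedness of the operations --- is extracted from that. Your verifications of the Haar system axioms are the right ones: $H$-invariance of \eqref{eq:2} via right-invariance of $\lambda_u$ and the diagonal identification $[\xi h,\eta h]=[\xi,\eta]$, left invariance by the direct substitution you display, and continuity from the continuity of $\{\lambda_u\}$ plus openness of the quotient (the one point you gloss is that one also needs the $\eta$-sections of $\supp\bigl((\xi,\eta)\mapsto f([\xi,\eta])\bigr)$ to be compact locally uniformly in $\xi$, which is again properness). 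The self-contained argument buys something the citation does not: it makes visible exactly which hypotheses are used where --- in particular, the Haar system on $H$ itself never enters the construction of $\mu$; only openness of the range and source maps of $H$ is needed for the quotient maps to be open, which is precisely the observation the paper appends as a remark at the end of its proof.
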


\begin{proof}
This material is all known.  The construction of the imprimitivity
groupoid and the results concerning the $(G^H,H)$-equivalence can be
found in \cite[Section 2]{groupoidequiv}.  The fact that $\mu$ is a
Haar system for $G^H$ can be found in \cite[Section 2]{irredreps}.  It
is worth observing that in order for $\mu$ to exist all we really
require is that $H$ have open range and source.  
\end{proof}

Once we have our imprimitivity groupoid we can couple it with the
action of $G$ on $A$ to form a new dynamical system. 

\begin{prop}
\label{prop:2}
Let $(A,G,\alpha)$ be a groupoid dynamical system and suppose $H$ is a
closed subgroupoid of $G$ with a Haar system.  Let $X=s\inv(H\unit)$,
$G^H$ be the imprimitivity groupoid, define $\rho : X/H\rightarrow
G\unit$ by $\rho([\gamma]) = r(\gamma)$, and let $\rho^*A$
be the pull back of $A$.  Then the collection $\sigma =
\{\sigma_{[\gamma,\eta]}\}_{[\gamma,\eta]\in G^H}$ where
$\sigma_{[\gamma,\eta]}:A(r(\eta)) \rightarrow A(r(\gamma))$
is given for $a\in A(r(\eta))$ by 
\begin{equation}
\label{eq:3}
\sigma_{[\gamma,\eta]}(a) = \alpha_{\gamma\eta\inv}(a)
\end{equation}
defines an action of $G^H$ on $\rho^*A$. 
\end{prop}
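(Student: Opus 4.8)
The plan is to verify the three defining properties of a groupoid action in the sense of \cite[Definition 4.1]{renaultequiv}: that each $\sigma_{[\gamma,\eta]}$ is a well-defined isomorphism between the correct fibres of $\rho^*\mcal{A}$, that $\sigma$ respects composition in $G^H$, and that the induced map on $\rho^*\mcal{A}$ is continuous. The first two properties are essentially bookkeeping with the groupoid operations together with the fact that $\alpha$ is already an action, while the continuity is where the real content lies. With the convention $s([\gamma,\eta]) = [\eta]$ and $r([\gamma,\eta]) = [\gamma]$, the fibre of $\rho^*\mcal{A}$ over $[\gamma]\in X/H$ is $A(\rho([\gamma])) = A(r(\gamma))$, so $\sigma_{[\gamma,\eta]}$ should send the fibre over $[\eta]$ to the fibre over $[\gamma]$. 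For well-definedness I would note that any other representative of $[\gamma,\eta]$ has the form $(\gamma h,\eta h)$ for $h\in H$ with $r(h) = s(\gamma) = s(\eta)$, and since $(\gamma h)(\eta h)\inv = \gamma\eta\inv$ in $G$ we get $\sigma_{[\gamma h,\eta h]} = \alpha_{\gamma\eta\inv} = \sigma_{[\gamma,\eta]}$; in particular $\gamma\eta\inv$ is a genuine element of $G$ with source $r(\eta)$ and range $r(\gamma)$, so $\sigma_{[\gamma,\eta]}$ really does map $A(r(\eta))$ to $A(r(\gamma))$. Because each $\alpha_\delta$ is an isomorphism, so is $\sigma_{[\gamma,\eta]}$, with inverse $\alpha_{\eta\gamma\inv} = \sigma_{[\eta,\gamma]} = \sigma_{[\gamma,\eta]\inv}$. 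For the homomorphism property I would choose representatives with matching middle term so that the product is $[\gamma,\zeta]$, and then the identity $\gamma\zeta\inv = (\gamma\eta\inv)(\eta\zeta\inv)$ combined with $\alpha_{\delta_1\delta_2} = \alpha_{\delta_1}\circ\alpha_{\delta_2}$ yields $\sigma_{[\gamma,\zeta]} = \sigma_{[\gamma,\eta]}\circ\sigma_{[\eta,\zeta]}$.

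The main work is continuity, namely showing that $([\gamma,\eta],a)\mapsto \sigma_{[\gamma,\eta]}(a)$ is continuous from $G^H * \rho^*\mcal{A}$ into $\rho^*\mcal{A}$. My strategy is to lift the entire picture to $X*X$ and exploit the continuity of $\alpha$ on $\mcal{A}$. Writing $q:X*X\to G^H$ for the quotient map, I would form the space $\widetilde W = \{((\gamma,\eta),a) : (\gamma,\eta)\in X*X,\ a\in\mcal{A},\ p(a) = r(\eta)\}$ together with the natural surjection $\pi:\widetilde W \to G^H * \rho^*\mcal{A}$ given by $((\gamma,\eta),a)\mapsto ([\gamma,\eta],([\eta],a))$. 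I would then define $\widetilde\Sigma:\widetilde W\to\rho^*\mcal{A}$ by $\widetilde\Sigma((\gamma,\eta),a) = ([\gamma],\alpha_{\gamma\eta\inv}(a))$ and check that it is continuous: the assignment $(\gamma,\eta)\mapsto \gamma\eta\inv$ is continuous into $G$, so $((\gamma,\eta),a)\mapsto(\gamma\eta\inv,a)$ is continuous into $G*\mcal{A}$ (the fibre condition $s(\gamma\eta\inv) = r(\eta) = p(a)$ holds), and composing with the continuous action map $\alpha\colon G*\mcal{A}\to\mcal{A}$, together with the continuous map $(\gamma,\eta)\mapsto[\gamma]$, lands us continuously in $\rho^*\mcal{A}$ (the output is consistent since $p(\alpha_{\gamma\eta\inv}(a)) = r(\gamma) = \rho([\gamma])$).

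The hard part will be descending this continuity through $\pi$. Since $\widetilde\Sigma$ is constant on the relevant $H$-orbits and factors as $\widetilde\Sigma = \Sigma\circ\pi$, where $\Sigma$ is the action map whose continuity we want, it suffices to know that $\pi$ is a quotient map. Here I would argue that $\pi$ is the orbit map for the action of $H$ on $\widetilde W$ that is diagonal right translation on the $X*X$ coordinate and trivial on $a$ (which is legitimate because $r(\eta h) = r(\eta)$ keeps $a$ in the same fibre), and that orbit maps of group(oid) actions are open, since the saturation of an open set is a union of translates and hence open; the requisite openness on the $X*X$ factor is exactly the equivalence structure recorded in Proposition \ref{prop:3} and \cite[Section 2]{groupoidequiv}. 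Granting that $\pi$ is an open continuous surjection, hence a quotient map, continuity of $\widetilde\Sigma$ forces continuity of $\Sigma$, which completes the verification that $\sigma$ is an action of $G^H$ on $\rho^*A$. The one point that warrants care, and which I regard as the genuine obstacle, is confirming that $\pi$ really identifies $\widetilde W/H$ with $G^H * \rho^*\mcal{A}$ as topological spaces, so that the open-orbit-map argument applies verbatim.
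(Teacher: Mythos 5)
Your proposal is correct and follows the same route as the paper, which simply records the fibre identification $\rho^*A([\gamma]) = A(r(\gamma))$ and then declares well-definedness, compatibility with the groupoid operations, and continuity to be ``straightforward''; you have supplied exactly those routine verifications, including the one genuinely nontrivial step of descending continuity through the quotient $X*X\to G^H$. The only detail to adjust is your justification for openness of the orbit map: for a groupoid action the individual translates $U\cdot h$ are not open in the total space (they live inside a single fibre of the moment map), so instead one should observe that the saturation $U\cdot H$ is the image of the open set $(U\times H)\cap(\widetilde{W}*H)$ under the homeomorphism $(w,h)\mapsto(w\cdot h,h\inv)$ of $\widetilde{W}*H$ followed by the first-coordinate projection, and that this projection is open precisely because $H$ has open range and source maps (guaranteed by its Haar system); with that correction your argument goes through.
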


\begin{proof}
First, recall that we identify the unit space of $G^H$ with $X/H$ so
that we may view the pull back $\rho^*A$ as a
$C_0((G^H)\unit)$-algebra.  Furthermore, given $[\gamma]\in X/H$
the fibre of $\rho^*A$ over $[\gamma]$ is  $\rho^* A([\gamma]) =
A(\rho([\gamma])) = A(r(\gamma))$.  Thus $\sigma_{[\gamma,\eta]}$ can
be viewed as an isomorphism of $\rho^*A([\eta])$ onto
$\rho^*A([\gamma])$.  It is now straightforward to see that $\sigma$ is
well defined, respects the groupoid operations, and is continuous. 
\end{proof}

\begin{remark}
Since $(\rho^*A,G^H,\sigma)$ is a groupoid dynamical system, we may
form the crossed product $\rho^*A\rtimes_\sigma G^H$ as the completion
of $\Gamma_c(G^H,r^*\rho^*\mcal{A})$.  Elements of this function
algebra can be viewed as continuous, compactly supported maps from
$G^H$ into $\mcal{A}$ such that $f([\gamma,\eta])\in A(r(\gamma))$ for
all $[\gamma,\eta]\in G^H$. 
\end{remark}

We can now use $\sigma$ to build the promised imprimitivity bimodule.

\begin{prop}
\label{prop:1}
Suppose $(A,G,\alpha)$ is a groupoid dynamical system.  
Furthermore, suppose $H$ is a
closed subgroupoid with Haar system $\lambda_H$.  Let $X$, $G^H$,
$\rho$ and $\sigma$ be as in Proposition \ref{prop:2}. 
Then $\mcal{Z}_0 = \Gamma_c(X,s^*\mcal{A})$ becomes a
pre-$\rho^*A\rtimes_\sigma G^H - A(H\unit)\rtimes_{\alpha|_H}
H$-imprimitivity bimodule with respect to the following actions for
$f\in \Gamma_c(G^H,r^*\rho^*\mcal{A})$, $g\in
\Gamma_c(H,r^*\mcal{A})$, and $z,w\in \mcal{Z}_0$: 
\begin{align*}
f\cdot z(\gamma) &= \int_G
\alpha_\gamma\inv(f([\gamma,\eta]))z(\eta)d\lambda_{s(\gamma)}(\eta)
\\
z\cdot g(\gamma) &= \int_H
\alpha_\eta(z(\gamma\eta)g(\eta\inv))d\lambda_H^{s(\gamma)}(\eta) \\
\llangle z,w \rrangle_{A\rtimes H}(\gamma) &= \int_G
z(\eta\gamma\inv)^* \alpha_\gamma(w(\eta))d\lambda_{s(\gamma)}(\eta)
\\
\lset{A\rtimes G^H}\llangle z,w \rrangle([\gamma,\eta]) &= \int_H
\alpha_{\gamma\xi}(z(\gamma \xi)w(\eta\xi)^*)d\lambda_H^{s(\gamma)}(\xi)
\end{align*}
The completion of $\mcal{Z}_0$, denoted $\mcal{Z}_H^G$, is a
$\rho^*A\rtimes_\sigma G^H - A(H\unit)\rtimes_{\alpha|_H}
H$-imprimitivity bimodule and $\rho^*A\rtimes_\sigma G^H$ is Morita
equivalent to $A(H\unit)\rtimes_\alpha H$. 
\end{prop}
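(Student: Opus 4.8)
The plan is to realize $\mcal{Z}_0$ as the section algebra of an equivalence bundle and then invoke the equivariant version of Renault's Equivalence Theorem \cite{renaultequiv}, which promotes a groupoid equivalence together with a compatible bundle of imprimitivity bimodules into a Morita equivalence of the associated crossed products. All of the underlying groupoid data is already in place: Proposition \ref{prop:3} exhibits $X = s\inv(H\unit)$ as a $(G^H,H)$-equivalence, with $H$ acting by right translation and $G^H$ acting via $[\gamma,\eta]\cdot\eta = \gamma$, while Proposition \ref{prop:2} supplies the action $\sigma$ of $G^H$ on $\rho^*A$. What remains is to manufacture the equivalence bundle over $X$ and to check that the structure it carries is compatible with both actions.

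First I would take the bundle to be $s^*\mcal{A}$, so that the fibre over $\gamma\in X$ is $A(s(\gamma))$, and equip each fibre with the structure of an $A(r(\gamma))-A(s(\gamma))$-imprimitivity bimodule. Since $\alpha_\gamma : A(s(\gamma))\rightarrow A(r(\gamma))$ is an isomorphism, the natural choice is to let $A(r(\gamma))$ act on the left through $\alpha_\gamma\inv$, let $A(s(\gamma))$ act on the right by multiplication, and define the right and left inner products on each fibre by $a^{*}b\in A(s(\gamma))$ and $\alpha_\gamma(ab^{*})\in A(r(\gamma))$ respectively; this is the standard imprimitivity bimodule induced by an isomorphism. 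Because $\rho^*A([\gamma]) = A(r(\gamma))$ and the fibre of $A(H\unit)$ at $s(\gamma)$ is $A(s(\gamma))$, these are exactly the fibres demanded by the two crossed products. I would then verify that $\sigma$ and $\alpha|_H$ lift to commuting actions on $s^*\mcal{A}$ that respect the fibrewise bimodule structure and the equivalence: the $H$-action is forced by $\alpha|_H$ along right translation, while the $G^H$-action is read off from $\sigma_{[\gamma,\eta]} = \alpha_{\gamma\eta\inv}$, where one uses crucially that $s(\gamma) = s(\eta)$ for $[\gamma,\eta]\in G^H$ so that the fibres over $\gamma$ and $\eta$ coincide.

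With the equivalence bundle in hand, Renault's Equivalence Theorem yields an imprimitivity bimodule structure on the completion of $\Gamma_c(X,s^*\mcal{A}) = \mcal{Z}_0$ and hence the asserted Morita equivalence. The final bookkeeping step is to confirm that the actions and inner products produced by the abstract theorem agree, after unwinding the identification $\rho^*A([\gamma]) = A(r(\gamma))$ and the definition of $\sigma$, with the four explicit formulas in the statement; for instance the left inner product $\lset{A\rtimes G^H}\llangle z,w\rrangle([\gamma,\eta])$ should emerge as the integral over the $H$-fibre of the fibrewise left inner product pushed forward by the action. I expect the main obstacle to be precisely this compatibility verification: one must establish continuity of the induced actions on $s^*\mcal{A}$ and that the fibrewise inner products assemble into well-defined, appropriately positive $\rho^*A\rtimes_\sigma G^H$- and $A(H\unit)\rtimes_{\alpha|_H}H$-valued forms, keeping careful track of which moment maps and which fibres are in play, since the left algebra lives over the quotient $X/H$ while the bundle itself lives over $X$. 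The positivity and fullness of the inner products, together with the Rieffel compatibility condition, are exactly what the equivalence theorem delivers once the equivariance axioms are confirmed, so essentially all of the genuine work is front-loaded into constructing the bundle and checking its two actions.
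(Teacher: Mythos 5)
Your proposal matches the paper's proof essentially step for step: both build the equivalence bundle $\Upsilon = s_X^*\mcal{A}$ over the $(G^H,H)$-equivalence $X$ from Proposition \ref{prop:3}, endow each fibre $\Upsilon_\gamma$ with the $A(r(\gamma))-A(s(\gamma))$-imprimitivity bimodule structure coming from the isomorphism $\alpha_\gamma$, equip the bundle with the induced $H$- and $G^H$-actions, and then invoke Renault's Equivalence Theorem before verifying the four explicit formulas. This is exactly the route the paper takes, so no further comparison is needed.
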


\begin{proof}
We will make use of the machinery of equivalence bundles from
\cite{renaultequiv,frenchrenault}.  In particular we are going to build an
equivalence bundle \cite[Definition 5.1]{renaultequiv} and then use Renault's
Equivalence Theorem \cite[Theorem 5.5]{renaultequiv}.  
First, recall from Proposition \ref{prop:3} that $X$ is
a $(G^H,H)$-equivalence.  The source map for $X$, $s_X:X\rightarrow
H\unit$,  is just the
restriction of the source map on $G$ to $X$, and the range map,
$r_X:X\rightarrow X/H$,
is the quotient map.  Our equivalence bundle is then the pull back 
$\Upsilon = s_X^* \mcal{A}$.  This is an usc-bundle
over $X$ and we can identify the fibre over $\gamma\in X$ with
$A(s(\gamma))$.  This allows us to give $\Upsilon_{\gamma}$ an
$A(r(\gamma))-A(s(\gamma))$-imprimitivity bimodule structure via the
isomorphism $\alpha_\gamma$ in the usual fashion \cite[Example
3.14]{tfb}.  Next, we define actions of $G^H$ and $H$ on $\Upsilon$
for $[\eta,\zeta]\in G^H$, $\xi\in H$ and $(\gamma,a)\in \Upsilon$ by 
\[
(\gamma,a)\cdot \xi := (\gamma\xi, \alpha_\xi\inv(a)),\quad\text{and}\quad
[\eta,\zeta]\cdot (\gamma,a) := (\eta\gamma\zeta\inv,a).
\]
It is then straightforward to show that $\Upsilon$ satisfies the
definition of an equivalence bundle and it follows from Renault's
Equivalence Theorem that $\mcal{Z}_0 = \Gamma_c(X,s^*A)$ is a
pre-$\rho^*A\rtimes_\sigma G^H-A(H\unit)\rtimes_\alpha
H$-imprimitivity bimodule.  It is then a matter of calculating that
the operations on $\mcal{Z}_0$ all have the correct form and the rest
of the proposition follows. 
\end{proof}

Now that we have built the imprimitivity bimodule $\mcal{Z}_H^G$
mentioned in Theorem \ref{thm:induce} we need to let $A\rtimes_\alpha
G$ act as adjointable operators.  However, it follows from the
general theory of imprimitivity bimodules that this is equivalent to
letting $A\rtimes_\alpha G$ map into the multipliers on $\rho^*
A\rtimes_\sigma G^H$.  Theorem \ref{thm:induce} will then follow
immediately from the following

\begin{prop}
\label{prop:4}
Let $(A,G,\alpha)$ be a groupoid dynamical system, $H$ a closed
subgroupoid of $G$ with a Haar system and $\mcal{Z}_H^G$ the
associated imprimitivity bimodule completed from $\mcal{Z}_0$ as in
Proposition \ref{prop:1}.
Then there is a nondegenerate homomorphism $\phi:A\rtimes_\alpha G
\rightarrow \mcal{L}(\mcal{Z}_H^G)$ such that for $f\in
\Gamma_c(G,r^*\mcal{A})$ and $z\in \mcal{Z}_0$
\begin{equation}
\label{eq:4}
\phi(f)z(\gamma) = \int_G
\alpha_\gamma\inv(f(\eta))z(\eta\inv\gamma)d\lambda^{r(\gamma)}(\eta). 
\end{equation}
\end{prop}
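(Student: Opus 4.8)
The plan is to exploit the imprimitivity bimodule structure from Proposition \ref{prop:1} rather than to estimate operator norms by hand. Since $\mcal{Z}_H^G$ is a $\rho^*A\rtimes_\sigma G^H - A(H\unit)\rtimes_{\alpha|_H} H$-imprimitivity bimodule, the general theory supplies a canonical isomorphism $\mcal{L}(\mcal{Z}_H^G)\cong M(\rho^*A\rtimes_\sigma G^H)$ identifying the adjointable operators with the multiplier algebra of the left coefficient algebra. So I would produce a nondegenerate homomorphism $\tilde\phi:A\rtimes_\alpha G\to M(\rho^*A\rtimes_\sigma G^H)$ and then set $\phi=\tilde\phi$ under this identification. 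The payoff is that boundedness becomes automatic: a multiplier of a $C^*$-algebra is bounded by definition, so I never have to control $\phi(f)$ on the incomplete module $\mcal{Z}_0$ directly.

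To build $\tilde\phi$, I would first work on the dense $*$-subalgebra $\Gamma_c(G,r^*\mcal{A})$ and let it act on $\Gamma_c(G^H,r^*\rho^*\mcal{A})$ by left and right convolution. The left action is motivated by the left translation action of $G$ on $X=s\inv(H\unit)$ and on the unit space $X/H$, and for $f\in\Gamma_c(G,r^*\mcal{A})$ and $F\in\Gamma_c(G^H,r^*\rho^*\mcal{A})$ I would set
\[
(f\cdot F)([\gamma,\eta]) = \int_G f(\zeta)\alpha_\zeta\bigl(F([\zeta\inv\gamma,\eta])\bigr)\,d\lambda^{r(\gamma)}(\zeta),
\]
together with a matching right action $F\cdot f$. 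First I would check that these are well defined, i.e. independent of the representative of $[\gamma,\eta]\in G^H$ (replacing $(\gamma,\eta)$ by $(\gamma\xi,\eta\xi)$ with $\xi\in H$ changes neither $r(\gamma)$ nor the relevant $H$-orbits), that the integrands are continuous and compactly supported, and that the resulting sections again lie in $\Gamma_c(G^H,r^*\rho^*\mcal{A})$ with values in the correct fibre $A(r(\gamma))$.

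The heart of the argument is then a sequence of Fubini-and-cocycle computations. I would verify that the pair $(L_f,R_f):=(F\mapsto f\cdot F,\ F\mapsto F\cdot f)$ is a genuine double centralizer for each $f$, that $f\mapsto(L_f,R_f)$ is multiplicative and $*$-preserving for the convolution and involution on $\Gamma_c(G,r^*\mcal{A})$, and that it is continuous for the inductive limit topology. Each of these reduces to left-invariance of the Haar systems $\lambda$ and $\lambda_H$ combined with the identities $\alpha_{\gamma\eta}=\alpha_\gamma\circ\alpha_\eta$ and $\sigma_{[\gamma,\eta]}=\alpha_{\gamma\eta\inv}$ of Proposition \ref{prop:2}; they are routine but lengthy, and I would organize them so that the $G^H$-convolution and the right $H$-module structure are manipulated in parallel. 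With inductive limit continuity in hand, I would extend $\tilde\phi$ from $\Gamma_c(G,r^*\mcal{A})$ to all of $A\rtimes_\alpha G$ by composing with a faithful representation of $\rho^*A\rtimes_\sigma G^H$ and invoking Renault's Disintegration Theorem \cite[Theorem 7.8, 7.12]{renaultequiv}, exactly as in Example \ref{ex:haus}; nondegeneracy would follow from an approximate-identity argument showing $\tilde\phi(\Gamma_c)\cdot\Gamma_c(G^H,r^*\rho^*\mcal{A})$ is dense.

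The main obstacle is twofold. First is the bookkeeping of tracking fibres through the quotient $G^H=(X*X)/H$: every formula must be representative-independent and land in $A(r(\gamma))$, and it is easy to introduce spurious $\alpha_\xi$ factors under the $H$-action. Second, and more essential, is confirming that the abstractly defined multiplier action agrees with the concrete operator \eqref{eq:4} on $\mcal{Z}_0$. For this I would unwind the isomorphism $\mcal{L}(\mcal{Z}_H^G)\cong M(\rho^*A\rtimes_\sigma G^H)$, which is implemented through $\lset{A\rtimes G^H}\llangle\cdot,\cdot\rrangle$, and compute $\lset{A\rtimes G^H}\llangle \tilde\phi(f)\cdot z,w\rrangle$ directly, matching it against $\lset{A\rtimes G^H}\llangle \phi(f)z,w\rrangle$ with $\phi(f)$ given by \eqref{eq:4}. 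Once these two left inner products agree for all $z,w\in\mcal{Z}_0$, nondegeneracy of the bimodule forces $\tilde\phi(f)$ and the operator \eqref{eq:4} to coincide, which completes the proof and, via Proposition \ref{prop:1}, yields Theorem \ref{thm:induce}.
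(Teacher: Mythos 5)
Your proposal is correct and follows essentially the same route as the paper: the same formula for the left multiplier action on $\Gamma_c(G^H,r^*\rho^*\mcal{A})$, the same identification $\mcal{L}(\mcal{Z}_H^G)\cong M(\rho^*A\rtimes_\sigma G^H)$, the same appeal to Renault's Disintegration Theorem (via a representation/state of $\rho^*A\rtimes_\sigma G^H$) to obtain the norm bound, and the same density-of-$M_fg$ lemma for nondegeneracy, which the paper proves as a separate, fairly lengthy lemma. The only caveat is that boundedness is not ``automatic'' from multiplier theory, since the double centralizer is a priori defined only on the dense subalgebra; but your subsequent Disintegration Theorem step supplies exactly the missing estimate, just as in the paper.
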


\begin{proof}
We start by letting $A\rtimes G$ act as multipliers on
$\rho^*A\rtimes_\sigma G^H$.  Given $f\in \Gamma_c(G,r^*\mcal{A})$ and
$g\in \Gamma_c(G^H,r^*\rho^*\mcal{A})$ define 
\begin{equation}
M_f(g)([\gamma,\eta]) = \int_G
f(\xi)\alpha_\xi(g([\xi\inv\gamma,\eta])) d\lambda^{r(\gamma)}(\xi).
\end{equation}
The arguments that $M_fg\in\Gamma_c(G^H,r^*\rho^*\mcal{A})$ and that
$(f,g)\mapsto M_fg$ is jointly continuous in the inductive limit
topology are similar to the corresponding arguments for convolution and 
are omitted.  In order to show that $M_f$ defines a multiplier we need
to show that it extends to an adjointable linear operator when we view
$\rho^* A\rtimes_\sigma G^H$ as a right Hilbert module over itself
in the usual fashion.  It is tedious, yet straightforward, to show
that $M$ preserves the following relations for $f,g\in
\Gamma_c(G,r^*\mcal{A})$ and $h,k \in \Gamma_c(G^H,r^*\rho^*\mcal{A})$
\begin{align*}
M_f(h*k) &= M_fh*k, & (M_fh)^* * k &= h^* *(M_{f^*} k), & M_{f*g}h &= M_f
M_g h.
\end{align*}
This shows that, at least on the appropriate dense function algebras,
$M_f$ is an adjointable linear operator and that $M$ is a
$*$-homomorphism on $\Gamma_c(G,r^*\mcal{A})$.  Next, we need to show
that $M$ satisfies a nondegeneracy condition.  Because this portion of
the proof is lengthy and somewhat unenlightening we separate it out
into the following lemma, which we will prove at the end of the
section.  

\begin{lemma}
\label{lem:4}
Functions of the form $M_f g$ are dense in
$\Gamma_c(G^H,r^*\rho^*\mcal{A})$ with respect to the inductive limit
topology. 
\end{lemma}

With this lemma can show that $M_f$ extends to an
adjointable operator on ${\rho^* A\rtimes_\sigma G^H}$ and that $\|M_f
\| \leq \|f\|$ so that as a $*$-homomorphism $M$ extends to
$A\rtimes_\alpha G$.  Suppose $\tau$ is a state on $\rho^*
A\rtimes_\sigma G^H$ and define a pre-inner product on
$\rho^*A\rtimes_\sigma G^H$ via $(g,h)_\tau := \tau(g^* * h)$.  Let
$\mcal{H}_\tau$ denote the resulting Hilbert space and $\mcal{H}_0$
the dense image of $\Gamma_c(G^H,r^*\rho^* \mcal{A})$ in
$\mcal{H}_\tau$.  Given $f\in \Gamma_c(G,r^*\mcal{A})$ we define an
operator $\pi(f)$ for $g\in\mcal{H}_0$ by $\pi(f)g = M_f g$.  
It is a simple matter to show that $\pi(f)$ is well defined and that
$\pi$ defines a homomorphism from
$\Gamma_c(G,r^*\mcal{A})$ into the algebra of linear operators on
$\mcal{H}_0$.  We will now show that we can apply Renault's
Disintegration Theorem.  Since elements of the form $M_fg$ are dense
in the inductive limit topology, it is clear that elements of the form
$\pi(f)h$ are dense in $\mcal{H}_\tau$.  Furthermore, we have asserted
that $M_f g$ is jointly continuous with respect to the inductive limit
topology and it follows that $f\mapsto (\pi(f)g,h)_\tau$ is continuous with
respect to the inductive limit topology. Finally, it is easy to show,
using the fact that $M_f$ is adjointable on $\Gamma_c(G^H,r^*\rho^*
\mcal{A})$ with adjoint $M_{f^*}$ that $(\pi(f)g,h)_\tau =
(g,\pi(f^*)h)_\tau$.  Thus we may apply the Disintegration Theorem
and conclude that $\pi$ extends to a
representation of $A\rtimes G$ on $\mcal{H}_\tau$.  In particular,
this implies that for $f\in \Gamma_c(G,r^*\mcal{A})$ and $g\in
\Gamma_c(G^H,r^*\rho^*\mcal{A})$ we have 
\[
\tau((M_fg)^**(M_fg)) = \|\pi(f)g\|_\tau^2 \leq \|f\|^2 \|g\|_\tau^2 = 
\|f\|^2 \tau(g^* * g) \leq \|f\|^2\|g\|^2.
\]
Since $\tau$ is an arbitrary state, we must have $\|M_f g\| \leq
\|f\| \|g\|$.  It now follows quickly that $M_f$ extends to a
multiplier on $\rho^*A\rtimes_\sigma G^H$ and in turn that $M$ extends
to a $*$-homomorphism from $A\rtimes_\alpha G$ into
$M(\rho^*A\rtimes_\sigma G^H)$.  

At this point we are essentially done.  We cite \cite[Proposition
3.8]{tfb} which states that $\rho^*A\rtimes_\sigma G^H \cong
\mcal{K}(\mcal{Z}_H^G)$ and then extend this isomorphism to
$M(\rho^*A\rtimes_\sigma G^H)$.  Thus we may view $M$ as a
nondegenerate homomorphism 
$\phi : A\rtimes G\rightarrow \mcal{L}(\mcal{Z}_G^H)$.  Furthermore, calculating
that $\phi$ is given by \eqref{eq:4} on elements of the form $g\cdot
z$ where $g\in \Gamma_c(G^H,r^*\rho^*\mcal{A})$ and $z\in\mcal{Z}_0$
is straightforward.  The fact that \eqref{eq:4} holds in general now
follows quickly using \cite[Proposition 6.8]{renaultequiv} and an
inductive limit topology argument.  
\end{proof}

Theorem \ref{thm:induce} is now an immediate consequence of
Proposition \ref{prop:4} and \cite[Proposition 2.66]{tfb}.  We finish
the section with the promised nondegeneracy proof.

\begin{proof}[Proof of Lemma \ref{lem:4}]
Fix $g\in \Gamma_c(G^H,r^*\rho^*\mcal{A})$ and let $L = \supp g$.  
Let $\{a_l\}_{l\in\Lambda}$ be an approximate identity for $A$ and use
\cite[Proposition 6.8]{renaultequiv} to choose, for each $4$-tuple
$(K,U,l,\epsilon)$ consisting of a compact set $K\subset G\unit$, a
conditionally compact neighborhood $U$ of $G\unit$ in $G$, $l\in
\Lambda$, and $\epsilon > 0$, a section $e = e_{(K,U,l,\epsilon)}\in
\Gamma_c(G,r^*\mcal{A})$ such that 
\begin{enumerate}
\item $\supp e \subset U$, 
\item $\int_G \|e(\gamma)\|d\lambda^u(\gamma) \leq 4$ for all $u\in
   K$, and 
\item $\|\int_G e(\gamma)d\lambda^u(\gamma) - a_l(u) \| < \epsilon$
  for all $u\in K$. 
\end{enumerate}
We will show that, when $\kappa = (K,U,l,\epsilon)$ is ordered by
increasing $K$ and $l$ and decreasing $U$ and $\epsilon$, we have
$M_{e_\kappa} g\rightarrow g$ with respect to the inductive limit
topology.  

Fix $\epsilon_1 > 0$ and let $K_1 = \rho(r(L))$.  Next, we
prove that there exists a conditionally compact neighborhood $U_1$ such that
$\xi\in U_1$ implies 
\begin{equation}
\label{eq:6}
\| \alpha_\xi(g([\xi\inv\gamma,\eta])) - g([\gamma,\eta])\| <
\epsilon_1
\end{equation}
for all $[\gamma,\eta]\in G^H$ such that $r(\gamma) = r(\xi)$.  
Suppose not and fix some conditionally compact neighborhood $W$ of
$G\unit$.  
Then for any conditionally compact neighborhood $G\unit \subset U
\subset W$ there exists $\xi_U \in U$ and $[\gamma_U,\eta_U]\in G^H$
such that 
\begin{equation}
\label{eq:5}
\|\alpha_{\xi_U}(g([\xi_U\inv \gamma_U,\eta_U])) -
g([\gamma_U,\eta_U])\| \geq \epsilon_1.
\end{equation}
However, for this to hold one of the terms must be nonzero so that we
must have 
\[
[\gamma_U,\eta_U] \in \widetilde{L} = \{ [\xi\gamma,\eta] : \text{$\xi\in W$,
$[\gamma,\eta]\in L$ and $s(\xi) = r(\gamma)$}\}.
\]
It is not particularly difficult to show that $\widetilde{L}$ is
compact.  Thus, ordering $\{[\gamma_U,\eta_U]\}$ by decreasing $U$, we
may pass to a subnet, relabel, and find new representatives such that
there exists $\gamma,\eta\in G$ with $\gamma_U\rightarrow \gamma$ and
$\eta_U\rightarrow \eta$.  Next, observe that $r(\xi_U) \in
\rho(r(\widetilde{L}))$ for all $U$ so that $\{\xi_U\}$ is contained
in the compact set $W\cap r\inv(\rho(r(\widetilde{L})))$.  Thus we may
pass to another subnet, relabel, and find $\xi\in G$ such that
$\xi_U\rightarrow \xi$.  However, by construction, $\xi$ is contained
in every conditionally compact neighborhood of $G\unit$.  It follows that
$\xi\in G\unit$ and therefore
$\alpha_{\xi_U}(g([\xi_U\inv\gamma_U,\eta_U]))\rightarrow
g([\gamma,\eta])$.  This contradicts \eqref{eq:5} which implies that
\eqref{eq:6} must hold.  

In order to complete our $4$-tuple we will show there exists $l_1\in
\Lambda$ such that $l\geq l_1$ implies
\begin{equation}
\label{eq:7}
\|a_l(r(\gamma))g([\gamma,\eta]) - g([\gamma,\eta])\| < \epsilon_1.
\end{equation}
It clearly suffices to verify \eqref{eq:7} on $L$.  Since $a_l$
factors to an approximate identity on each fibre we have
$a_l(r(\gamma))g([\gamma,\eta])\rightarrow g([\gamma,\eta])$ for each
$[\gamma,\eta]\in L$.  Use the fact that the norm is
upper-semicontinuous to choose for each $[\gamma,\eta]\in L$ some
neighborhood $O_{[\gamma,\eta]}$ of $[\gamma,\eta]$ and some
$b_{[\gamma,\eta]}\in \{a_l\}$ such that 
\begin{equation}
\label{eq:8}
\|b_{[\gamma,\eta]}(r(\xi))g([\xi,\zeta]) - g([\xi,\zeta]) \| <
\frac{\epsilon_1}{3}
\end{equation}
for all $[\xi,\zeta]\in O_{[\gamma,\eta]}$.  Since $L$ is compact, we
can find some finite subcover $\{O_i\}$.  Let $\phi_i \in C_c(G^H)$ be
a partition of unity with respect to $\{O_i\}_{i=1}^N$ so that $\supp\phi_i
\subset O_i$ and $\sum \phi_i([\gamma,\eta]) = 1$ if $[\gamma,\eta]\in
L$.  Next define $h \in \Gamma_c(G^H,r^*\rho^*\mcal{A})$ to be 
$h = \sum_{i=1}^N \phi_i \otimes b_{[\gamma_i,\eta_i]}$.  Then, by
construction, for all $[\xi,\zeta]\in L$ we have 
\begin{equation}
\label{eq:9}
\|h([\xi,\zeta])g([\xi,\zeta]) - g([\xi,\zeta])\| <
\frac{\epsilon_1}{3}.  
\end{equation}
Moving on, we can find $l_1$ such that if $l \geq l_1$ then 
\[
\| a_l b_{[\gamma_i,\eta_i]} - b_{[\gamma_i,\eta_i]} \| <
\frac{\epsilon_1}{3\|g\|_\infty}
\]
for all $i$.  It follows quickly that for all $[\xi,\zeta]\in L$
\begin{equation}
\label{eq:10}
\| a_l (r(\xi))h([\xi,\zeta]) - h(\xi,\zeta) \| <
\frac{\epsilon_1}{3\|g\|_\infty}.
\end{equation}
Finally, using \eqref{eq:9} and \eqref{eq:10} and the fact that
$\|a_l\| < 1$ for all $l$, we compute for $l \geq l_1$ and
$[\xi,\zeta]\in L$
\begin{align*}
\|a_l(r(\xi))g([\xi,\zeta]) - g([\xi,\zeta])\| \leq&
\|a_l(r(\xi))(g([\xi,\zeta]) - h([\xi,\zeta])g([\xi,\zeta]))\|\\
&+\|(a_l(r(\xi))h([\xi,\zeta]) - h([\xi,\zeta]))g([\xi,\zeta])\|\\ 
&+\|h([\xi,\zeta])g([\xi,\zeta]) - g([\xi,\zeta])\| \\
<& \|a_l(r(\xi))\|\frac{\epsilon_1}{3} + \|g\|_\infty
\frac{\epsilon_1}{3\|g\|_\infty} + \frac{\epsilon_1}{3} \leq
\epsilon_1.
\end{align*}

Now, suppose we are given $\epsilon_0 > 0$ and let $\epsilon_1 =
\epsilon_0/(5+\|g\|_\infty)$.  Choose $U_1$ and $l_1$ for $\epsilon_1$
as above.  Then given $e = e_{(K,U,l,\epsilon)}$ with $K_1 \subset K$,
$U\subset U_1$, $l_1 \leq l$, and $\epsilon < \epsilon_1$ we compute
for $[\gamma,\eta]\in G^H$
\begin{align*}
\|M_e g([\gamma,\eta]) - g([\gamma,\eta])\| \leq& \left\|\int_G
  e(\xi)(\alpha_\xi(g([\xi\inv\gamma,\eta]))-g([\gamma,\eta]))d\lambda^{r(\gamma)}(\xi)\right\|
\\
&+ \left\|\left(\int_G
e(\xi)d\lambda^{r(\gamma)}(\xi)-a_l(r(\gamma))\right)g([\gamma,\eta])\right\|\\ 
&+ \|a_l(r(\gamma))g([\gamma,\eta]) - g([\gamma,\eta])\| \\
<& \int_U
\|e(\xi)\|\|\alpha_\xi(g([\xi\inv\gamma,\eta]))-g([\gamma,\eta])\|d\lambda^{r(\gamma)}(\xi)
\\
&+ \epsilon_1\|g([\gamma,\eta])\| + \epsilon_1 \\
<& 4\epsilon_1 + \epsilon_1\|g\|_\infty + \epsilon_1 = \epsilon_0.
\end{align*}
Thus $M_{e_\kappa}g\rightarrow g$ uniformly and it is straightforward
to show that this convergence takes place with respect to the
inductive limit topology. 
\end{proof}

\section{Basic Constructions}
\label{sec:basic-constructions}

\subsection{Transitive Groupoid Crossed Products}

In this (very short) 
section we will use an application of Renault's
Equivalence Theorem to identify the Morita equivalence class of
transitive groupoid crossed products.  This will be used in
Section \ref{sec:stab-t_0-orbits} to prove the main result of the
paper.

\begin{theorem}
\label{thm:transitive}
Suppose $(A,G,\alpha)$ is a groupoid dynamical system and that $G$ is
transitive.  Fix $u\in G\unit$ and let
$\beta$ be Haar measure on $S_u$.  Then $\mcal{X}_0 = C_c(G_u,A(u))$
becomes a pre-$A\rtimes_\alpha G -
A(u)\rtimes_{\alpha|_{S_u}}S_u$-imprimitivity bimodule when equipped
with the following operations.
\begin{align*}
f\cdot z(\gamma) &= \int_G
\alpha_\gamma\inv(f(\eta))z(\eta\inv\gamma)d\lambda^{r(\gamma)}(\eta) \\
z\cdot g(\gamma) &= \int_{S_u}
\alpha_s(z(\gamma s)g(s\inv))d\beta(s) \\
\llangle z,w \rrangle_{A(u)\rtimes S_u}(s) &= 
\int_G z(\eta s\inv)^*\alpha_s(w(\eta))d\lambda_u(\eta)
\\
\lset{A\rtimes G}\llangle z,w \rrangle(\gamma) &= \int_{S_u}
\alpha_{\gamma\zeta s}(z(\gamma\zeta s)w(\zeta s)^*)d\beta(s) 
\end{align*}
Hence, the
completion $\mcal{X}$ of $\mcal{X}_0$ is an imprimitivity bimodule and
$A\rtimes_\alpha G$ is Morita equivalent to $A(u)\rtimes_{\alpha}
S_u$.  
\end{theorem}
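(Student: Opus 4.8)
The plan is to obtain Theorem~\ref{thm:transitive} as the special case $H=S_u$ of Proposition~\ref{prop:1}, after identifying the imprimitivity groupoid crossed product with $A\rtimes_\alpha G$. First I would observe that $S_u$ is a closed subgroupoid of $G$ whose unit space is the single point $\{u\}$ and whose Haar system is the Haar measure $\beta$. With $H=S_u$ we have $X=s\inv(H\unit)=G_u$, the fibre of $s^*\mcal{A}$ over $\gamma\in G_u$ is $A(s(\gamma))=A(u)$ and hence constant over $G_u$, so $\mcal{Z}_0=\Gamma_c(G_u,s^*\mcal{A})$ is naturally $C_c(G_u,A(u))=\mcal{X}_0$, and $A(H\unit)=A(u)$. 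Proposition~\ref{prop:1} then hands us, with no further work, a pre-$\rho^*A\rtimes_\sigma G^{S_u}-A(u)\rtimes_{\alpha|_{S_u}}S_u$-imprimitivity bimodule structure on $\mcal{X}_0$. Everything therefore reduces to showing that $\rho^*A\rtimes_\sigma G^{S_u}\cong A\rtimes_\alpha G$ and that, under this isomorphism, the four operations of Proposition~\ref{prop:1} become the ones displayed in the theorem.

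The conceptual heart, and the place where transitivity is used, is the identification of $G^{S_u}$ with $G$. I would define $\Theta:G^{S_u}\to G$ by $\Theta([\gamma,\eta])=\gamma\eta\inv$. This is well defined because it is constant on diagonal $S_u$-orbits, and it is a groupoid homomorphism since $(\gamma\eta\inv)(\eta\zeta\inv)=\gamma\zeta\inv$ and $\Theta([\eta,\gamma])=(\gamma\eta\inv)\inv$. Transitivity makes $\Theta$ a bijection: given $\zeta\in G$ one chooses $\eta\in G_u$ with $r(\eta)=s(\zeta)$ and sets $\gamma=\zeta\eta$, whence $\Theta([\gamma,\eta])=\zeta$; injectivity follows by taking $\xi=\eta\inv\eta'\in S_u$. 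One checks that $\Theta$ is a homeomorphism carrying the unit space $G_u/S_u$ onto $G\unit$ via $[\gamma]\mapsto r(\gamma)$ and transporting the Haar system $\mu$ of \eqref{eq:2} to $\lambda$. Under this identification $\rho$ becomes the isomorphism $G_u/S_u\cong G\unit$, so $\rho^*A$ is $A$ itself, and since $\sigma_{[\gamma,\eta]}=\alpha_{\gamma\eta\inv}=\alpha_{\Theta([\gamma,\eta])}$ the system $(\rho^*A,G^{S_u},\sigma)$ is isomorphic to $(A,G,\alpha)$. Hence the crossed products are isomorphic.

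It then remains to transport the operations. The right action and the $A(u)\rtimes S_u$-valued inner product of Proposition~\ref{prop:1} already involve only $H=S_u$ and the fibre $A(u)$, so they coincide verbatim with the formulas in the theorem. For the left action, a section $f\in\Gamma_c(G,r^*\mcal{A})$ corresponds to $\tilde f([\gamma,\eta])=f(\gamma\eta\inv)=f(\Theta([\gamma,\eta]))$; substituting $\xi=\gamma\eta\inv$ in the formula of Proposition~\ref{prop:1} and invoking left-invariance of $\lambda$ to replace $d\lambda_u(\eta)$ by $d\lambda^{r(\gamma)}(\xi)$ produces exactly $\int_G\alpha_\gamma\inv(f(\xi))z(\xi\inv\gamma)\,d\lambda^{r(\gamma)}(\xi)$. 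The left inner product is handled the same way: writing a general $\gamma\in G$ as $\Theta([\gamma\zeta,\zeta])$ for some $\zeta\in G_u$ with $r(\zeta)=s(\gamma)$ turns Proposition~\ref{prop:1}'s formula into the stated $\int_{S_u}\alpha_{\gamma\zeta s}(z(\gamma\zeta s)w(\zeta s)^*)\,d\beta(s)$, and one notes this is independent of the representative $\zeta$, which is guaranteed a priori by the bimodule structure.

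I expect the main obstacle to be bookkeeping rather than anything conceptual: verifying that the measure substitutions are correctly justified by left-invariance of $\lambda$ and $\beta$, and confirming the left inner-product formula is well defined independently of the chosen representative $\zeta$. The genuinely essential point is the transitivity hypothesis, which is exactly what collapses the imprimitivity groupoid $G^{S_u}$ back onto $G$; without it $\Theta$ would fail to be surjective and one would be left only with the Morita equivalence of Proposition~\ref{prop:1} between $A(u)\rtimes S_u$ and the generally larger algebra $\rho^*A\rtimes_\sigma G^{S_u}$.
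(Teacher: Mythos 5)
Your argument is correct, but it is organized differently from the paper's. The paper does not route through Proposition \ref{prop:1} at all: it applies Renault's Equivalence Theorem directly, using the fact (cited from Example 2.2 of \cite{groupoidequiv}) that $G_u$ is a $(G,S_u)$-equivalence, and exhibiting the trivial bundle $\Upsilon = G_u\times A(u)$ --- with fibres made into $A(r(\gamma))$--$A(u)$-imprimitivity bimodules via $\alpha_\gamma$ and with the actions $\eta\cdot(\gamma,a)=(\eta\gamma,a)$ and $(\gamma,a)\cdot s=(\gamma s,\alpha_s^{-1}(a))$ --- as an equivalence bundle for $(A,G,\alpha)$ and $(A(u),S_u,\alpha|_{S_u})$; the four displayed formulas then come straight out of the Equivalence Theorem. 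Your route instead specializes Proposition \ref{prop:1} to $H=S_u$ and collapses the imprimitivity groupoid via $\Theta([\gamma,\eta])=\gamma\eta^{-1}$, using transitivity for surjectivity, then transports the Haar system, the action $\sigma$, and the operations. Both proofs rest on the same underlying fact --- that $G_u$ implements an equivalence between $G$ and $S_u$, which is precisely the statement $G^{S_u}\cong G$ --- so neither is more elementary; yours costs an extra isomorphism-of-dynamical-systems verification plus the change-of-variables bookkeeping you describe, but it buys the explicit identification of $\mathcal{X}$ with $\mathcal{Z}_{S_u}^G$ as right Hilbert modules, which the paper needs anyway and asserts without proof in Lemma \ref{lem:1}. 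One point you should not leave as ``one checks'': that $\Theta$ is a homeomorphism (equivalently, that $(\gamma,\eta)\mapsto\gamma\eta^{-1}$ is open onto $G$) is exactly the nontrivial content of the cited equivalence, resting on openness of $r|_{G_u}$ for second countable transitive groupoids; cite it rather than verify it by hand.
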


\begin{proof}
Let $(A,G,\alpha)$ be as above and fix $u\in G\unit$.  
First recall \cite[Example 2.2]{groupoidequiv} that because $G$ is
a transitive, second countable groupoid
the space $X=G_u = s\inv(u)$ is a $(G,S_u)$-equivalence with respect
to the natural actions of $G$ and $S_u$ on $X$.  
Consider the trivial bundle
$\Upsilon = X\times A(u)$.  This is clearly an usc-bundle whose
space of compactly supported sections may be identified with $\mcal{X}_0 =
C_c(G_u,A(u))$.  Given $\gamma\in X$ we equip $\Upsilon_\gamma = A(u)$
with the $A(r(\gamma))-A(u)$-imprimitivity bimodule structure arising
from the isomorphism $\alpha_\gamma:A(u)\rightarrow A(r(\gamma))$. 
We then define actions of $G$ and $S_u$ on the left and right,
respectively, of $\Upsilon$ by 
\[
\eta\cdot (\gamma,a) = (\eta\gamma,a),\quad\text{and}\quad 
(\gamma,a)\cdot s = (\gamma s, \alpha_s\inv(a))
\]
for $\eta\in G$, $s\in S_u$ and $(\gamma,a)\in \Upsilon$.  It is now
straightforward to show that with these operations $\Upsilon$ is an
equivalence bundle with respect to the dynamical systems
$(A,G,\alpha)$ and $(A(u),\alpha|_{S_u},S_u)$.  The result now follows
from Renault's Equivalence Theorem.
\end{proof}

\subsection{Invariant Ideals}

In this section we would like to see that crossed products behave well
with respect to ideals arising from $G$-invariant sets in $G\unit$.
Given a locally compact $G$-invariant subset $Y\subset G\unit$ we
define the restriction of $G$ to $Y$ to be $G|_Y = r\inv(Y) =
s\inv(Y)$.  It is not difficult to show that $G|_Y$ is a locally
compact Hausdorff groupoid and that the restriction of the Haar system
to $G|_Y$ is a Haar system.  Furthermore, if we have a groupoid
dynamical system $(A,G,\alpha)$ then one can easily show that the
restriction of the action $\alpha$ to $G|_Y$ defines an action of
$G|_Y$ on $A(Y)$.  Hence, once can form the restricted crossed product
$A(Y)\rtimes_\alpha G|_Y$.  
The main goal will be to prove the following result.  

\begin{definition}
\label{def:2}
Given a groupoid dynamical system $(A,G,\alpha)$ and an open
$G$-invariant subset $U$ of $G\unit$ let $\Ex(U)$ be the closure in
$A\rtimes_\alpha G$ of the set 
\[
\{ f\in \Gamma_c(G,r^*\mcal{A}):\supp f\subset G|_U\}.
\]
\end{definition}

\begin{theorem}
\label{thm:invtideal}
Suppose $(A,G,\alpha)$ be a groupoid dynamical system, $U$ is an
open $G$-invariant subset of $G\unit$ and $C$ is the closed $G$-invariant
subset $G\unit\setminus U$.  Then inclusion and restriction extend to
$*$-homomorphisms $\iota : A(U)\rtimes G|_U \rightarrow A\rtimes G$
and $\rho:A\rtimes G \rightarrow A(C)\rtimes G|_C$ respectively.
Furthermore, the following sequence is short exact
\[
\begin{CD} 
0 @>>> A(U)\rtimes G|_U @>\iota>> A\rtimes G @>\rho>> A(C)\rtimes G|_C @>>>0 
\end{CD}
\]
and $\ran \iota = \ker \rho = \Ex(U)$ so that $A(C)\rtimes G|_C$ is
isomorphic to the quotient space $A\rtimes G/\Ex(U)$.  
\end{theorem}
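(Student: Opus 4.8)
The plan is to realize both maps first on the dense convolution algebras and then extend them to the completions. On sections, $\iota$ sends $f\in\Gamma_c(G|_U,r^*\mcal{A})$ to its extension by zero: since $G|_U=r\inv(U)$ is open in $G$ this lands in $\Gamma_c(G,r^*\mcal{A})$, and because $U$ is $G$-invariant the range of integration in the convolution never leaves $G|_U$, so $\iota$ is a $*$-homomorphism continuous for the inductive limit topology. Dually, $\rho$ sends $f\in\Gamma_c(G,r^*\mcal{A})$ to its restriction $f|_{G|_C}$; as $\gamma\in G|_C$ gives $f(\gamma)\in A(r(\gamma))$ with $r(\gamma)\in C$, this is directly a section of $r^*(\mcal{A}|_C)$, and restriction is compatible with convolution and involution because $C$ is closed and $G$-invariant. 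Composing each map with an arbitrary representation of its target and invoking Renault's Disintegration Theorem \cite{renaultequiv} produces inductive-limit-continuous $*$-representations of the respective section algebras, which are therefore bounded by the universal norms; hence both maps are norm decreasing and extend to $*$-homomorphisms $\iota:A(U)\rtimes G|_U\to A\rtimes G$ and $\rho:A\rtimes G\to A(C)\rtimes G|_C$.

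Next I would dispatch the two easy facts. For surjectivity of $\rho$, given $g\in\Gamma_c(G|_C,r^*(\mcal{A}|_C))$ a Tietze-type extension of sections of the usc-bundle over the closed set $G|_C$ produces $\tilde g\in\Gamma_c(G,r^*\mcal{A})$ with $\rho(\tilde g)=g$; thus $\ran\rho$ is dense, and since it is also closed (being the image of a $C^*$-homomorphism) $\rho$ is onto. For the trivial containment, any $f$ with $\supp f\subseteq G|_U$ satisfies $f|_{G|_C}=0$ because $U\cap C=\emptyset$, so $\rho$ vanishes on the generators of $\Ex(U)$ and $\Ex(U)\subseteq\ker\rho$. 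Finally, by Definition \ref{def:2} we have $\overline{\ran\iota}=\Ex(U)$ on the nose, since the sections supported in $G|_U$ are exactly $\iota(\Gamma_c(G|_U,r^*\mcal{A}))$.

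The heart of the argument is two disintegration computations. To prove $\ker\rho\subseteq\Ex(U)$, I use the $C^*$-algebraic fact that an element lies in a closed ideal iff it is killed by every representation annihilating that ideal; so it suffices to show that every representation $L$ of $A\rtimes G$ with $L|_{\Ex(U)}=0$ factors as $L=\bar L\circ\rho$. Disintegrating $L$ via \cite{renaultequiv} yields a quasi-invariant measure $\mu$ on $G\unit$ together with bundle data; annihilation of $\Ex(U)=\overline{C_0(U)\cdot(A\rtimes G)}$ forces the underlying representation of $A$ to kill the ideal $A(U)$, equivalently $\mu(U)=0$, so $\mu$ is concentrated on $C$. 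The disintegration data then lives over $C$ and reassembles into a representation $\bar L$ of $A(C)\rtimes G|_C$ with $\bar L\circ\rho=L$, whence $L(f)=\bar L(\rho(f))=0$ for any $f\in\ker\rho$, giving $f\in\Ex(U)$. The companion computation establishes that $\iota$ is isometric: the inequality $\|\iota(f)\|\le\|f\|$ is already in hand, and for the reverse one shows every nondegenerate representation of $A(U)\rtimes G|_U$ extends to $A\rtimes G$, disintegrating it into a measure supported on $U$ and pushing that measure forward along $U\hookrightarrow G\unit$ (equivalently, extending the representation of the ideal $A(U)\triangleleft A$ to $A$) before reassembling. Taking the supremum over representations yields $\|f\|\le\|\iota(f)\|$.

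Assembling these pieces, $\iota$ is an isomorphism of $A(U)\rtimes G|_U$ onto the closed ideal $\overline{\ran\iota}=\Ex(U)=\ker\rho$, and $\rho$ is a surjection with exactly this kernel, so it descends to an isomorphism $(A\rtimes G)/\Ex(U)\cong A(C)\rtimes G|_C$; this is precisely the asserted short exact sequence. Conceptually this is the statement that $A\rtimes G$ is a $C_0(G\unit/G)$-algebra for which the invariant open set $U$ cuts out the ideal and the invariant closed set $C$ the quotient, but I argue directly by disintegration to remain self-contained and to sidestep the possible non-Hausdorffness of $G\unit/G$. The main obstacle is the concentration argument in the third paragraph: one must verify carefully that vanishing on $\Ex(U)$ genuinely forces $\mu(U)=0$ with no mass leaking onto $U$, and that the restricted fibred data assembles into an honest representation of $A(C)\rtimes G|_C$; the extension argument for $\iota$ is technically parallel, its delicate point being the nondegeneracy needed to extend a representation of the ideal crossed product back up to $A\rtimes G$.
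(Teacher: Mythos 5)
Your proposal is correct in outline and follows the same skeleton as the paper (define both maps on the dense section algebras, get surjectivity of $\rho$ and the easy containments, then fight for the isometry of $\iota$ and for $\ker\rho\subseteq\Ex(U)$), but it attacks the two hard estimates by a genuinely different mechanism. You disintegrate the ambient representation into a quasi-invariant measure on $G\unit$ plus Borel bundle data, localize the measure to $U$ (resp.\ $C$), and reintegrate over the subgroupoid. The paper never looks at the measure at all: for the isometry of $\iota$ (Proposition \ref{prop:10}) it extends a faithful representation $R$ of the ideal purely algebraically, setting $T(f)\sum R(g_i)h_i=\sum R(f*g_i)h_i$ on $\spn\{R(g)h\}$ (well-definedness via a left approximate identity), and invokes the Disintegration Theorem only as a boundedness criterion for $T$; for $\ker\rho\subseteq\Ex(U)$ it defines $T(\rho(f))=R(f)$ for a representation $R$ with $\ker R=\Ex(U)$ and shows $T$ is $I$-norm decreasing, which is where the upper semicontinuity of $u\mapsto\int_G\|f(\gamma)\|\,d\lambda^u(\gamma)$ (Lemma \ref{lem:21}) and a bump-function truncation of $f$ near $G|_C$ enter. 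Your route is more conceptual and makes the role of the invariant sets transparent, but it uses the full direct-integral form of Renault's theorem in both directions; the points you flag as delicate (that killing $\Ex(U)$ forces $\mu(U)=0$, that the restricted data reassembles into a representation bounded in the \emph{universal} norm of $A(C)\rtimes G|_C$, and that the measure pushed forward from $U$ remains quasi-invariant for $G$) are precisely the verifications the paper's function-algebra arguments are engineered to avoid, so they would need to be written out in full. One small omission on your side: you should record explicitly that $\Ex(U)$ is a two-sided ideal --- immediate from $G$-invariance of $U$, since $\supp(f*g)\subset r\inv(r(\supp f))$ and $\supp(g*f)\subset s\inv(s(\supp f))$ --- as this is needed both for exactness and for your characterization of $\Ex(U)$ as the intersection of the kernels of the representations annihilating it.
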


Due to the fact that kernels are ill behaved with respect to
completions, this theorem is deceptively difficult to prove.  
We will study the left hand side of
the short exact sequence first.

\begin{prop}
\label{prop:10}
Given a groupoid dynamical system $(A,G,\alpha)$ and an open
$G$-invariant set $U\subset G\unit$ then the inclusion map 
$\iota:\Gamma_c(G|_U,r^*\mcal{A})\rightarrow \Gamma_c(G,r^*\mcal{A})$
extends to an isomorphism of $A(U)\rtimes_\alpha G|_U$ onto $\Ex(U)$.
Furthermore, $\Ex(U)$ is an ideal in $A\rtimes G$. 
\end{prop}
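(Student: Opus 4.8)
The plan is to handle the ideal claim and the isomorphism claim separately, treating the isomorphism as an equality of universal norms. Since $U$ is $G$-invariant we have $r\inv(U)=s\inv(U)=G|_U$, so for $u\in U$ the measure $\lambda^u$ is supported inside $G|_U$ and the Haar system of $G|_U$ is exactly the restriction of $\lambda$. Consequently, for sections supported on $G|_U$ the convolution and involution of $A\rtimes G$ agree with those of $A(U)\rtimes_\alpha G|_U$, so $\iota$ is a $*$-homomorphism at the level of function algebras. First I would dispatch the ``Furthermore'' statement by a support computation: if $\supp f\subset G|_U$ and $g\in\Gamma_c(G,r^*\mcal{A})$ is arbitrary, then a nonzero integrand in $f*g(\gamma)$ forces $f(\eta)\neq 0$, hence $r(\eta)=r(\gamma)\in U$ and $\gamma\in r\inv(U)=G|_U$; likewise a nonzero integrand in $g*f(\gamma)$ forces $\eta\inv\gamma\in G|_U$, hence $s(\gamma)\in U$ and $\gamma\in s\inv(U)=G|_U$. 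Thus $\{f:\supp f\subset G|_U\}$ is a two-sided ideal of $\Gamma_c(G,r^*\mcal{A})$ and its closure $\Ex(U)$ is a closed ideal of $A\rtimes G$.

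For the isomorphism I would show that the universal norm of $A(U)\rtimes_\alpha G|_U$ coincides with the norm that $\iota$ inherits from $A\rtimes G$. One inequality is routine: given any representation $L$ of $A\rtimes G$, the composition $L\circ\iota$ is an $I$-norm bounded $*$-representation of $\Gamma_c(G|_U,r^*\mcal{A})$ (the $I$-norm on $G|_U$ being the restriction of that on $G$), so by Renault's Disintegration Theorem it is dominated by the universal norm of $A(U)\rtimes_\alpha G|_U$. Taking the supremum over $L$ gives $\|\iota(f)\|_{A\rtimes G}\leq\|f\|_{A(U)\rtimes G|_U}$, so $\iota$ extends to a $*$-homomorphism $\bar\iota$ of $A(U)\rtimes_\alpha G|_U$ into $A\rtimes G$; its range is a $C^*$-subalgebra containing the dense image of $\Gamma_c(G|_U,r^*\mcal{A})$, hence equals $\Ex(U)$.

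The reverse inequality is the crux, and the main obstacle is to show that every representation of $A(U)\rtimes_\alpha G|_U$ is the restriction of a representation of $A\rtimes G$. I would disintegrate an arbitrary representation $\rho$ of $A(U)\rtimes_\alpha G|_U$ into a covariant representation assembled from a quasi-invariant measure $\mu$ on $U$, a Borel Hilbert bundle over $U$, a fibred representation of $A(U)$, and a unitary representation of $G|_U$, and then extend this data to all of $(A,G,\alpha)$. Here $G$-invariance of $U$ is precisely what makes the extension possible: viewing $\mu$ as a measure on $G\unit$ with $\mu(C)=0$, the associated measure $\int\lambda^u\,d\mu(u)$ on $G$ is concentrated on $G|_U$ (again because $r\inv(U)=s\inv(U)=G|_U$), so quasi-invariance for $G|_U$ passes to quasi-invariance for $G$; the Hilbert bundle extends over the $\mu$-null set $C$ arbitrarily, and since $\mu$-almost every $\gamma\in G$ already lies in $G|_U$, the unitary representation and the covariance relation are defined $\mu$-almost everywhere with no further choices. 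Integrating this covariant representation yields a representation $L$ of $A\rtimes G$ with $L\circ\iota=\rho$, whence $\|f\|_{A(U)\rtimes G|_U}=\sup_\rho\|\rho(f)\|\leq\|\iota(f)\|_{A\rtimes G}$. Combined with the previous paragraph, $\bar\iota$ is isometric and therefore an isomorphism of $A(U)\rtimes_\alpha G|_U$ onto $\Ex(U)$. The delicate points, as flagged by the remark preceding Theorem \ref{thm:invtideal}, are exactly the measurability and null-set bookkeeping in this disintegration-and-extension step; everything else is formal.
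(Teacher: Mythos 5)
Your proposal is sound, and its first two steps (the ideal claim via the support computation, and the inequality $\|\iota(f)\|\leq\|f\|$ obtained by composing representations of $A\rtimes G$ with $\iota$ and invoking the Disintegration Theorem) coincide with what the paper does. For the crux --- the reverse inequality --- you take a genuinely different route. The paper never disintegrates a representation of $A(U)\rtimes G|_U$ into covariant data. Instead it fixes a faithful representation $R$ of $A(U)\rtimes G|_U$ and extends it algebraically to $\Gamma_c(G,r^*\mcal{A})$ by setting $T(f)\sum R(g_i)h_i := \sum R(f*g_i)h_i$, after observing that $f*g$ is supported in $G|_U$ whenever $g$ is; well-definedness is checked with a left approximate identity in $\Gamma_c(G|_U,r^*\mcal{A})$, and the Disintegration Theorem is then used only as a black box to conclude that $T$ is bounded by the universal norm (its hypotheses being density of the range, inductive-limit continuity, and the formal adjoint relation $(T(f)h,k)=(h,T(f^*)k)$). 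The identity $R=T\circ\iota$ on a dense set then yields $\|f\|=\|T(\iota(f))\|\leq\|\iota(f)\|$. Your route opens up that black box: you disintegrate $\rho$ into $(\mu,\mathcal{H},\pi,V)$ over $U$ and extend the covariant data across the $\mu$-null set $C=G\unit\setminus U$. This does work, and the points you flag are the right ones --- $r\inv(U)=s\inv(U)=G|_U$ forces the induced measure $\int\lambda^u\,d\mu(u)$ to be concentrated on $G|_U$, so quasi-invariance and covariance extend with no further choices --- but it costs you all the Borel bookkeeping of the disintegration machinery, whereas the paper's convolution trick obtains the extension with no explicit measure theory. Both arguments ultimately rest on the same theorem of Renault; the paper applies it to the extended representation $T$ to get boundedness, while you apply it to the original representation $\rho$ to get covariant data and then reintegrate.
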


\begin{proof}
It is straightforward to show that $\Ex(U)$ is an ideal and that
$\iota$ is a homomorphism on $\Gamma_c(G|_U,r^*\mcal{A})$ which is
continuous with respect to the inductive limit topology and which maps onto
a dense subset of $\Ex(U)$.  It now follows from the Disintegration
Theorem that $\iota$ extends to a surjective homomorphism from
$A(U)\rtimes G|_U$ onto $\Ex(U)$.  All that is left is to show that $\iota$
is isometric.  

Suppose $R$ is a faithful representation of $A(U)\rtimes G|_U$ on a
separable Hilbert space $\mcal{H}$.  Let $\mcal{H}_0 = \spn\{R(f)h:
f\in \Gamma_c(G|_U,r^*\mcal{A}),h\in\mcal{H}\}$ and observe that
$\mcal{H}_0$ is dense in $\mcal{H}$.  If $f\in \ran \iota$ and $g\in
\Gamma_c(G,r^*\mcal{A})$ then it is easy to see that
$f*g(\gamma) = 0$ unless $\gamma\in r(\supp f)\subset U$.  Thus
$r(\supp f*g)\subset U$ so that $\supp f*g \subset G|_U$.  In
particular, we can view $f*g$ as a function in
$\Gamma_c(G|_U,r^*\mcal{A})$.  We now define a representation of
$\Gamma_c(G,r^*\mcal{A})$ on $\mcal{H}_0$ via
\begin{equation}
\label{eq:13}
T(f)\sum_{i=1}^n R(g_i)h_i = \sum_{i=1}^n R(f*g_i)h_i.
\end{equation}
We need to show $T$ is well defined.  Using \cite[Proposition
6.8]{renaultequiv} we can find a net $\{e_\kappa\}\subset
\Gamma_c(G|_U,r^*\mcal{A})$ so that $e_\kappa$ is a left approximate
identity with respect to the inductive limit topology.  Suppose
$\sum_i R(g_i)h_i = 0$ and $f\in \Gamma_c(G,r^*\mcal{A})$.  We then
have 
\begin{align*}
\sum_{i=1}^n R(f*g_i)h_i &= \sum_{i=1}^n R(f*\lim_\kappa (e_\kappa*g_i)) \\
&= \sum_{i=1}^n R(\lim_\kappa f*e_\kappa*g_i) h_i  \\
&= \lim_\kappa R(f)R(e_\kappa) \sum_{i=1}^n R(g_i)h_i = 0.
\end{align*}
Next, an elementary computation shows that $T$ 
is a homomorphism into the algebra of linear operators on
$\mcal{H}_0$.  Furthermore, observe that if $f\in
\Gamma_c(G|_U,r^*\mcal{A})$ then $T(\iota(f)) = R(f)$.  We now show
that $T$ satisfies the conditions of the Disintegration Theorem.  It
follows immediately from the fact that $R$ is nondegenerate and
$\{e_\kappa\}$ is a left approximate identity that the set
$\{T(f)k:f\in\Gamma_c(G,r^*\mcal{A}), k\in\mcal{H}_0\}$ has a dense span in
$\mcal{H}$.  The fact that $f\mapsto (T(f)h,k)$ is continuous with
respect to the inductive limit topology for given $h,k\in\mcal{H}_0$ 
follows immediately from the
fact that convolution and $R$ are both continuous with respect to the
inductive limit topology.  Finally, a simple computation
shows that $(T(f)h,k) = (h,T(f^*)k)$ for all
$f\in\Gamma_c(G,r^*\mcal{A})$ and $h,k\in\mcal{H}_0$.  Thus it follows
from the Decomposition Theorem that $T$ is bounded with respect to the
universal norm and extends to a representation of $A\rtimes G$.
Furthermore, since $R=T\circ \iota$ on a dense subset, this identity
holds in general.  Thus, given $f\in A(U)\rtimes G|_U$ we have 
\[
\|f\| = \|R(f)\| = \|T(\iota(f))\| \leq \|\iota(f)\|.
\]
It follows that $\iota$ is isometric and we are done. 
\end{proof}

The complement to Proposition \ref{prop:10} is the following 

\begin{prop}
\label{prop:11}
Suppose $(A,G,\alpha)$ is a groupoid dynamical system and $C$ is a
closed $G$-invariant subset of $G\unit$.  Then the restriction map
$\rho:\Gamma_c(G,r^*\mcal{A})\rightarrow \Gamma_c(G|_C,r^*\mcal{A})$
extends to a surjective homomorphism from $A\rtimes G$ onto
$A(C)\rtimes_\alpha G|_C$.  Furthermore,
$\rho(\Gamma_c(G,r^*\mcal{A}))$ is dense in
$\Gamma_c(G|_C,r^*\mcal{A})$ with respect to the inductive limit
topology. 
\end{prop}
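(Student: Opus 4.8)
The plan is to verify that $\rho$ is a $*$-homomorphism on the dense function algebra, then prove the ``Furthermore'' density statement, and finally invoke Renault's Disintegration Theorem to push $\rho$ to the completion and deduce surjectivity.

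First I would check that $\rho$, defined as honest restriction $f\mapsto f|_{G|_C}$, is a $*$-homomorphism from $\Gamma_c(G,r^*\mcal{A})$ into $\Gamma_c(G|_C,r^*\mcal{A})$ that is continuous for the inductive limit topology. The only point requiring the hypothesis is that $\rho$ intertwines convolution, and here the $G$-invariance of $C$ is essential. Since $C$ is invariant we have $G|_C = r\inv(C) = s\inv(C)$, so for $\gamma\in G|_C$ the entire range fibre $G^{r(\gamma)}$ lies in $G|_C$ (if $r(\eta)=r(\gamma)\in C$ then $s(\eta)\in C$), and likewise $\eta\inv\gamma\in G|_C$. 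Thus the integral defining $f*g(\gamma)$ only sees the values of $f$ and $g$ on $G|_C$ and is taken against the restricted Haar system, giving $\rho(f*g)=\rho(f)*\rho(g)$; the involution is handled identically since $\gamma\in G|_C$ forces $\gamma\inv\in G|_C$. Continuity in the inductive limit topology is immediate because restriction does not enlarge supports and preserves uniform convergence.

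Next I would prove the density assertion, which also supplies the dense range needed later. By Proposition \ref{prop:9}, elementary tensors $\phi\otimes a$ with $\phi\in C_c(G|_C)$ and $a\in A(C)$ span an inductive-limit-dense subset of $\Gamma_c(G|_C,r^*\mcal{A})$. Each such tensor lifts: since $G|_C=r\inv(C)$ is closed in $G$, Tietze extension produces $\tilde\phi\in C_c(G)$ with $\tilde\phi|_{G|_C}=\phi$; and since the restriction map of the $C_0(G\unit)$-algebra $A$ onto $A(C)=\Gamma_0(C,\mcal{A}|_C)$ is surjective (standard $C_0(X)$-algebra theory), there is $\tilde a\in A$ restricting to $a$. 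Then $\tilde\phi\otimes\tilde a\in\Gamma_c(G,r^*\mcal{A})$ and $\rho(\tilde\phi\otimes\tilde a)=\phi\otimes a$, so $\ran\rho$ contains $C_c(G|_C)\odot A(C)$ and is therefore inductive-limit dense in $\Gamma_c(G|_C,r^*\mcal{A})$.

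To extend $\rho$ to $A\rtimes G$ I would avoid direct norm estimates and instead use the universal property, exactly as in Proposition \ref{prop:10}. Given any representation $\pi$ of $A(C)\rtimes G|_C$, the composite $\pi\circ\rho$ is a $*$-homomorphism of $\Gamma_c(G,r^*\mcal{A})$ into operators on the representation space; it is continuous in the inductive limit topology because both $\rho$ and $\pi$ are, and it is nondegenerate because $\ran\rho$ is inductive-limit dense and $\pi$ is nondegenerate. Hence $\pi\circ\rho$ satisfies the hypotheses of Renault's Disintegration Theorem and extends to a representation of $A\rtimes G$, yielding $\|\pi(\rho(f))\|\le\|f\|_{A\rtimes G}$. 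Taking the supremum over all $\pi$ gives $\|\rho(f)\|\le\|f\|_{A\rtimes G}$, so $\rho$ extends to a homomorphism $A\rtimes G\to A(C)\rtimes G|_C$; its range contains the dense algebra $\Gamma_c(G|_C,r^*\mcal{A})$ and is closed as the image of a $C^*$-homomorphism, hence is all of $A(C)\rtimes G|_C$. The main obstacle is precisely this boundedness step, since norms interact badly with the universal completion and cannot be estimated directly on functions; the device that resolves it is to run the Disintegration Theorem on $\pi\circ\rho$ for arbitrary $\pi$, which depends on having first established the density of $\ran\rho$ to secure nondegeneracy. Thus the density assertion is not a mere addendum but the technical input that unlocks the extension.
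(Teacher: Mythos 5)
Your proposal is correct and follows essentially the same route as the paper's (much terser) proof: check that $G$-invariance of $C$ makes restriction a $*$-homomorphism continuous in the inductive limit topology, get density of the range from the elementary-tensor density of Proposition \ref{prop:9} together with extension of functions off the closed set $G|_C$, and obtain boundedness in the universal norm by composing with representations and invoking the Disintegration Theorem. Your observation that the density of $\ran\rho$ is needed up front to secure nondegeneracy in the Disintegration step is a fair point of bookkeeping that the paper leaves implicit.
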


\begin{proof}
This proposition is almost entirely straightforward.  Simple
calculations show that $\rho$ is a homomorphism which is continuous
with respect to the inductive limit topology.  Hence $\rho$ extends to
a bounded homomorphism on $A\rtimes G$.  Standard arguments
using \cite[Proposition C.24]{tfb2} then show that
$\rho(\Gamma_c(G,r^*\mcal{A}))$ is dense with respect to the inductive
limit topology and thus $\rho$ is surjective.  
\end{proof}

Next we prove the following interesting technical lemma.  

\begin{lemma}
\label{lem:21}
If $(A,G,\alpha)$ is a groupoid dynamical system then the function 
\[
u\mapsto \int_G \|f(\gamma)\| d\lambda^u(\gamma)
\]
is upper-semicontinuous for all $f\in\Gamma_c(G,r^*\mcal{A})$.  
\end{lemma}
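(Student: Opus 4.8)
The plan is to reduce the statement to the continuity of the Haar system by approximating the integrand from above by genuinely continuous functions. Write $g(\gamma) = \|f(\gamma)\|$. Since $\mcal{A}$ is an upper-semicontinuous $C^*$-bundle, the norm map $a\mapsto\|a\|$ is upper-semicontinuous on the total space, and because $f$ is a continuous section of $r^*\mcal{A}$, the composite $g$ is upper-semicontinuous on $G$. Moreover $\supp g \subseteq \supp f =: K$ is compact, so $g$ is bounded, say by $M$, and $\int_G g\,d\lambda^u < \infty$ for every $u$ since each $\lambda^u$ is a Radon measure. Note the point of the lemma: $g$ is only upper-semicontinuous, not continuous (reflecting that $\mcal{A}$ is an upper-semicontinuous, not continuous, bundle), which is exactly why the conclusion is upper-semicontinuity rather than continuity.

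First I would produce a decreasing sequence $g_n \in C_c(G)^{+}$ with $g_n \downarrow g$ pointwise and with supports in a fixed compact set. As $G$ is second countable, locally compact and Hausdorff it is metrizable; fixing a compatible metric $d$ and setting $g_n(\gamma) = \sup_{\eta}\bigl(g(\eta) - n\,d(\gamma,\eta)\bigr)$ yields the usual inf-convolution (Moreau) envelopes. Each $g_n$ is $n$-Lipschitz hence continuous, bounded by $M$; the sequence is decreasing in $n$; and $g_n(\gamma)\downarrow g(\gamma)$ at every point precisely because $g$ is upper-semicontinuous. Since $g$ vanishes off $K$, one checks that $g_n$ vanishes outside $\{\gamma : d(\gamma,K)\le M/n\}$, and because $K$ has a compact neighborhood $K_0$ with $d(K, G\setminus K_0)>0$, for all large $n$ this support set lies inside $K_0$. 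Thus the tail of the sequence consists of elements of $C_c(G)$ supported in the single compact set $K_0$.

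Next, for each fixed $n$ the function $u\mapsto \int_G g_n\,d\lambda^u$ is continuous, since $g_n \in C_c(G)$ and $\lambda=\{\lambda^u\}$ is a Haar system, i.e.\ a continuous family of measures. Fixing $u$, the sequence $g_n$ decreases to $g$ and is dominated by a single $g_{n_0}\in L^1(\lambda^u)$, so the monotone (dominated) convergence theorem gives
\[
\int_G g\,d\lambda^u = \inf_{n}\int_G g_n\,d\lambda^u .
\]
Hence $u\mapsto \int_G g\,d\lambda^u$ is a pointwise infimum of continuous functions on $G\unit$, and an infimum of continuous functions is upper-semicontinuous. This is exactly the assertion of the lemma.

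The only genuinely delicate point is the construction of the decreasing approximants $g_n$ from above together with the control of their supports; everything after that is the continuity of the Haar system plus a routine convergence argument. I would be careful to verify the support estimate $\supp g_n \subseteq \{d(\cdot,K)\le M/n\}$ and the positivity of $d(K,G\setminus K_0)$ so that the dominated convergence step is legitimate. (Alternatively, one may simply cite the standard fact that a nonnegative, compactly supported, upper-semicontinuous function on a second countable locally compact Hausdorff space is a decreasing pointwise limit of functions in $C_c(G)^{+}$, which is what the Moreau construction establishes concretely.)
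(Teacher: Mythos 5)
Your proof is correct, but it takes a genuinely different route from the paper's. The paper argues algebraically: it first computes that for an elementary tensor $\phi\otimes a$ the function is $u\mapsto \|a(u)\|\int_G|\phi(\gamma)|\,d\lambda^u(\gamma)$, a product of an upper-semicontinuous function and a continuous one; it then approximates a general $f\in\Gamma_c(G,r^*\mcal{A})$ by sums of elementary tensors in the inductive limit topology, observes that $|\lambda(f)(u)-\lambda(g)(u)|\le\|f-g\|_I$ so the corresponding integrals converge uniformly, and concludes because uniform limits of upper-semicontinuous functions are upper-semicontinuous. You instead work measure-theoretically on the integrand itself: you exploit only that $\gamma\mapsto\|f(\gamma)\|$ is a compactly supported, nonnegative, upper-semicontinuous function and write it as a decreasing pointwise limit of functions in $C_c(G)^{+}$ (via the Lipschitz sup-convolution envelopes, with the support control you describe), so that the integral against $\lambda^u$ becomes a pointwise infimum of continuous functions of $u$, hence upper-semicontinuous. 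Your version is more self-contained and slightly more general --- it proves the statement for any compactly supported upper-semicontinuous integrand paired with a continuous family of Radon measures, and never uses the $C_0(G\unit)$-module structure or the density of $C_c(G)\odot A$ --- at the cost of the metrizability argument and the verification that the envelopes eventually have compact support. The paper's version trades that analysis for machinery (Proposition \ref{prop:9}) it has already set up. Both arguments are complete; the delicate points you flag (the support estimate and the positivity of the distance from $K$ to the complement of a compact neighborhood) are exactly the ones that need checking, and they do go through since $G$ is second countable, locally compact and Hausdorff, hence metrizable.
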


\begin{proof}
Given $f\in\Gamma_c(G,r^*\mcal{A})$ define
$\lambda(f):G\unit\rightarrow \mathbb{R}$ by  
\[\lambda(f)(u) = \int_G \|f(\gamma)\| d\lambda^u(\gamma).\]
If $\phi\otimes a$ is an elementary tensor in $C_c(G)\odot A$ then 
\[
\lambda(\phi\otimes a)(u) = \int_G |\phi(\gamma)|d\lambda^u(\gamma) \|a(u)\|.
\]
It follows from the continuity of the Haar system and the fact
that $u\mapsto \|a(u)\|$ is upper-semicontinuous that
$\lambda(\phi\otimes a)$ is upper-semicontinuous.  Now suppose
$f\in\Gamma_c(G,r^*\mcal{A})$.  Then there exists a set of
elementary tensors $\{\phi_i^j\otimes a_i^j\}$ such that $k_i = \sum_j
\phi_i^j\otimes a_i^j$ converges to $f$ with 
respect to the inductive limit topology and
therefore with respect to the $I$-norm.  

\begin{remark}
Recall that for groupoid crossed products the $I$-norm is defined for
$f\in \Gamma_c(G,r^*\mcal{A})$ by 
\[
\|f\|_I = \max\left\{ \sup_{u\in G\unit} \int_G \|f(\gamma)\|d\lambda^u(\gamma)
  , \sup_{u\in G\unit} \int_G \|f(\gamma)\|d\lambda_u(\gamma)\right\}.
\]
\end{remark}

Now, $\lambda(k_i) =
\sum_j\lambda(\phi_i^j\otimes a_i^j)$, and it is straightforward to show
that sums of upper-semicontinuous functions are upper-semicontinuous.
Hence, $\lambda(k_i)$ is upper-semicontinuous.  We then conclude
from the computation 
\begin{align*}
\left|\int_G \|f(\gamma)\|d\lambda^u(\gamma) - \int_G
  \|g(\gamma)\|d\lambda^u(\gamma)\right| & \leq
\int_G |\,\|f(\gamma)\|-\|g(\gamma)\|\,| d\lambda^u(\gamma) \\
&\leq \int_G \|f(\gamma)-g(\gamma)\| d\lambda^u(\gamma)\\
&\leq \|f-g\|_I
\end{align*}
that $\lambda(k_i)\rightarrow \lambda(f)$ uniformly.  The
result follows from the fact that uniform limits of
upper-semicontinuous functions are upper-semicontinuous.  
\end{proof}

We now can finish with a proof of Theorem \ref{thm:invtideal} 

\begin{proof}[Proof of Theorem \ref{thm:invtideal}]
All that is left is to show that $\ker\rho = \Ex(U) = \ran\iota$. It
is clear that given $f\in \Gamma_c(G|_U,r^*\mcal{A})$ we have
$\rho\circ\iota(f) = 0$.  Hence $\ran\iota \subset\ker\rho$ and we are
reduced to proving that $\ker\rho \subset \Ex(U)$.  Let $R$ be a
representation of $A\rtimes G$ such that $\ker R=\Ex(U)$.  Now,
suppose we have $f,g\in \Gamma_c(G,r^*\mcal{A})$ such that $\rho(f) =
\rho(g)$. Standard approximation arguments show that $f-g\in
\Ex(U)=\ker R$.  Thus the representation $T$ of
$\Gamma_c(G|_C,r^*\mcal{A})$ given by $T(\rho(f)) = R(f)$ is well
defined.  Furthermore, since $R$ and $\rho$ are homomorphisms, it
follows that $T$ is as well.  We would like to see that $T$ is
$I$-norm decreasing.  

Suppose $f\in \Gamma_c(G,r^*\mcal{A})$ and fix $\epsilon > 0$.  
It follows from Lemma \ref{lem:21} that 
we can find for each $v\in r(\supp f)\cap C$ some relatively
compact open neighborhood $O_v$ of $v$ such that $w\in O_v$ implies 
\begin{equation}
\label{eq:15}
\int_G \|f(\gamma )\|d\lambda^w(\gamma) \leq \int_G \|f(\gamma)\|
d\lambda^v(\gamma) + \epsilon \leq \|\rho(f)\|_I + \epsilon.
\end{equation}
By considering the continuous compactly supported function
$\gamma\mapsto f(\gamma\inv)$ we can, in the same fashion, 
also find for each $v\in r(\supp
f)\cap C$ some relatively compact 
open neighborhood $V_v$ of $v$ such that $w\in V_v$ implies 
\begin{equation}
\label{eq:16}
\int_G \|f(\gamma)\|d\lambda_w(\gamma) \leq \int_G \|f(\gamma)\|
d\lambda_v(\gamma) +\epsilon \leq \|\rho(f)\|_I + \epsilon.
\end{equation}
Since $\{O_v\cap V_v\}$ is an open cover of the compact set $r(\supp
f)\cap C$ there exists some finite subcover $\{O_{v_i}\cap
V_{v_i}\}_{i=1}^N$.  Let $O=\bigcup_i O_{v_i}\cap V_{v_i}$ and observe
that, because the union is finite, $O$ is relatively compact.  Now
choose $\phi \in C_c(G\unit)$ such that $\phi$ is one on $r(\supp
f)\cap C$, zero off $O$, and $0\leq \phi \leq 1$.  Define $g\in
\Gamma_c(G,r^*\mcal{A})$ by $g(\gamma) = \phi(r(\gamma))f(\gamma)$.
It now follows quickly from \eqref{eq:15} and \eqref{eq:16} that 
$\|g\|_I \leq \|\rho(f)\|_I + \epsilon$.  However, $g-f$ is zero on
$C$ by construction so that 
\[
\|T(\rho(f))\| = \|R(f)\| = \|R(g)\| \leq \|g\| \leq \|g\|_I \leq
\|\rho(f)\|_I + \epsilon. 
\]
Since $\epsilon > 0$ is arbitrary, this shows that $T$ is $I$-norm
decreasing, and because $T$ is an $I$-norm decreasing representation it
follows from the Disintegration Theorem that $T$ extends to a
representation on $A(C)\rtimes G|_C$.  Finally, since $T\circ \rho =
R$ on a dense subset, this identity holds everywhere.  Thus $\ker \rho
\subset \ker R = \Ex(U)$ and we are done. 
\end{proof}

\section{Stabilizers and Regular Groupoids}
\label{sec:stab-t_0-orbits}

The goal for this section, and the main result of this paper, is the following

\begin{theorem}
\label{thm:liftstab}
Suppose $(A,G,\alpha)$ is a groupoid dynamical system and that
$G$ is a regular groupoid.  Then every irreducible representation of
$A\rtimes_\alpha G$ is equivalent to one of the form $\Ind_{S_u}^G R$
where $u\in G\unit$ and $R$ is an irreducible representation of
$A(u)\rtimes_\alpha S_u$. 
\end{theorem}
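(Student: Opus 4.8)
The plan is to reduce the problem to the transitive case by a sequence of restrictions, using the regularity hypothesis precisely to isolate a single orbit. Given an irreducible representation $L$ of $A\rtimes_\alpha G$, it corresponds to a primitive ideal $P$, and I would first study how $L$ interacts with the central $C_0(G\unit/G)$-structure. Because $G$ is regular, the orbit space $G\unit/G$ is $T_0$, and this is exactly the hypothesis that lets me claim that $L$ ``lives over'' a single orbit in an appropriate sense. More precisely, the center of the multiplier algebra of $A\rtimes G$ contains (a copy of) the $G$-invariant continuous functions on $G\unit$, equivalently $C_0(G\unit/G)$, and $L$ pushes this forward to a character-like object; regularity guarantees that the support of the associated measure is concentrated on the closure of a single orbit $[u]$.

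\emph{First reduction (to the orbit closure).} Let $\mcal{O}$ be the orbit of $u$ and consider its closure $\overline{\mcal{O}}$, a closed $G$-invariant set, with complement the open $G$-invariant set $U = G\unit\setminus\overline{\mcal{O}}$. By Theorem \ref{thm:invtideal} I have a short exact sequence relating $A\rtimes G$, the ideal $\Ex(U)\cong A(U)\rtimes G|_U$, and the quotient $A(\overline{\mcal{O}})\rtimes G|_{\overline{\mcal{O}}}$. The argument that $L$ is concentrated over $[u]$ means precisely that $L$ annihilates $\Ex(U)$, so $L$ factors through the quotient and may be regarded as an irreducible representation of $A(\overline{\mcal{O}})\rtimes G|_{\overline{\mcal{O}}}$. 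Here I expect to need a second application of the dichotomy: regularity should give that the orbit $\mcal{O}$ is \emph{locally closed}, hence open in its closure $\overline{\mcal{O}}$, so that $\mcal{O}$ itself is a locally compact $G$-invariant subset of $\overline{\mcal{O}}$.

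\emph{Second reduction (to the orbit itself).} Applying Theorem \ref{thm:invtideal} again, this time with the open invariant set $\mcal{O}\subset\overline{\mcal{O}}$, the ideal $A(\mcal{O})\rtimes G|_{\mcal{O}}$ sits inside $A(\overline{\mcal{O}})\rtimes G|_{\overline{\mcal{O}}}$. The key point is that $L$ should not annihilate this ideal: its ``image orbit'' is dense, but the concentration on $[u]$ forces nondegeneracy on $A(\mcal{O})\rtimes G|_{\mcal{O}}$. Since an irreducible representation that is nonzero on an ideal restricts to an irreducible representation of that ideal and is determined by it, I may replace $A\rtimes G$ by $A(\mcal{O})\rtimes G|_{\mcal{O}}$. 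Now $G|_{\mcal{O}}$ is a \emph{transitive} groupoid, so Theorem \ref{thm:transitive} applies: $A(\mcal{O})\rtimes_\alpha G|_{\mcal{O}}$ is Morita equivalent to $A(u)\rtimes_{\alpha} S_u$ via the imprimitivity bimodule $\mcal{X}$. Under the Rieffel correspondence, $L$ corresponds to an irreducible representation $R$ of $A(u)\rtimes_\alpha S_u$, and the induced representation coming from $\mcal{X}$ is exactly $\Ind_{S_u}^G R$ — this requires identifying the bimodule $\mcal{X}$ of Theorem \ref{thm:transitive} with the restriction of $\mcal{Z}_{S_u}^G$ of Proposition \ref{prop:1}, which is a bookkeeping check on the module formulas in \eqref{eq:1} versus the $f\cdot z$ action in Theorem \ref{thm:transitive}.

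The main obstacle is the very first step: making rigorous the claim that an irreducible representation is ``concentrated over a single orbit.'' This is where the full force of the Mackey--Glimm dichotomy enters, and it is the only genuinely nontrivial ingredient. The cleanest route is to work through the spectrum of the central subalgebra: the restriction of $L$ determines a quasi-orbit (a closed invariant set that is the closure of an orbit and minimal with the property of supporting $L$), and $T_0$-ness of $G\unit/G$ together with local closedness of orbits forces this quasi-orbit to be a single orbit closure with the orbit itself locally closed and dense in it. I expect to cite \cite{groupoiddichotomy} and possibly adapt the measure-theoretic ergodicity argument from \cite{cpsmooth}; once the representation is known to be concentrated on $[u]$ with $\mcal{O}$ locally closed, both applications of Theorem \ref{thm:invtideal} and the final appeal to Theorem \ref{thm:transitive} are formal.
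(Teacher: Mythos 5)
Your route is genuinely different from the paper's. You reduce a given irreducible representation $L$ to a single quasi-orbit (Effros--Hahn style), then use local closedness of orbits to pass first to the orbit closure via Theorem \ref{thm:invtideal}, then to the orbit itself as an essential ideal, and finish with Theorem \ref{thm:transitive}; the bookkeeping identifying $\mcal{X}$-induction composed with restriction/extension with $\Ind_{S_u}^G$ is exactly what Lemmas \ref{lem:1} and \ref{lem:2} supply. The paper instead never proves that an individual representation concentrates on an orbit: it uses the almost-Hausdorff form of the Mackey--Glimm dichotomy to produce a transfinite chain of open invariant sets $U_\beta$ whose strata have Hausdorff orbit space, turns $\{\Ex(U_\beta)\}$ into a composition series, invokes the purely $C^*$-algebraic fact that every irreducible representation lives on a subquotient (Lemma \ref{lem:3}), and then on each Hausdorff stratum localizes to an orbit using the $C_0(G\unit/G)$-algebra structure of Proposition \ref{prop:5} together with \cite[Proposition C.5]{tfb2}. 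The composition-series route buys you a proof with no measure theory at all; your route, if completed, is more in the spirit of Gootman--Rosenberg--Sauvageot and would localize the representation in one step rather than through a stratification.

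That said, the step you flag as the ``main obstacle'' is not a formality, and as written it contains the one real gap. First, the opening appeal to ``the center of the multiplier algebra contains $C_0(G\unit/G)$'' does not get off the ground when $G\unit/G$ is merely $T_0$: there may be essentially no nonzero invariant functions, which is precisely why Proposition \ref{prop:5} assumes $G\unit/G$ Hausdorff. The object you actually need is the representation of the non-central copy of $C_0(G\unit)\subset M(A\rtimes G)$, whose associated measure on $G\unit$ must be shown to be quasi-invariant and ergodic for the $G$-action (via Renault's disintegration theorem and irreducibility of $L$), after which regularity --- in its ``every ergodic quasi-invariant measure is concentrated on an orbit'' incarnation --- gives concentration on a locally closed orbit, and one must still check that concentration of the measure forces $L$ to annihilate $\Ex(G\unit\setminus\overline{\mcal{O}})$ and to be nondegenerate on the ideal over $\mcal{O}$. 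None of this is false, but it is the entire content of the theorem in your approach, and the citations you propose do not cover the groupoid-crossed-product case directly. Either carry out that disintegration/ergodicity argument in full, or switch to the paper's composition-series decomposition, which was designed to avoid it.
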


\begin{remark}
\label{rem:1}
Generalizing this result to non-regular groupoids is difficult.  
For group crossed products the result is
known as the Gootman-Rosenberg-Sauvageot theorem and a proof may be
found in \cite[Chapter 9]{tfb2}.  For groupoid $C^*$-algebras the
result is proved in \cite{geneffhan}, and for general groupoid crossed
products the question is still open. 
\end{remark}

\begin{remark}
A concise way of phrasing Theorem \ref{thm:liftstab} would be to say
that every irreducible representation of $A\rtimes G$ is induced from
a stabilizer subgroup.  Unfortunately, this would conflict with
\cite[Definition 8.10]{tfb2}.  The problem lies in the meaning of the
word stabilizers.  In \cite{tfb2} the stabilizers are the stabilizer
subgroups with respect to the action of $G$ on $\Prim A$.  In Theorem
\ref{thm:liftstab} the stabilizers are taken with respect to the
action of $G$ on $G\unit$ and may be larger.  Of course, when $A$ has
Hausdorff spectrum these two notions match up, and in this case it is
not difficult to show that Theorem \ref{thm:liftstab} can be viewed as
a generalization of \cite[Theorem 8.16]{tfb2} using Example
\ref{ex:haus}.  Reconciling these differences in the non-Hausdorff
case is an open question.  
\end{remark}

The key to proving Theorem \ref{thm:liftstab} will be to reduce to the
case where $G\unit/G$ is Hausdorff because in this case we have the
following result. 

\begin{prop}
\label{prop:5}
Suppose $(A,G,\alpha)$ is a groupoid dynamical system and
$G\unit/G$ is Hausdorff.  Then $A\rtimes_\alpha G$ is a
$C_0(G\unit/G)$-algebra with the action $\phi\cdot f(\gamma) =
\phi(G\cdot r(\gamma))f(\gamma)$ for $\phi\in C_0(G\unit/G)$ and $f\in
\Gamma_c(G,r^*\mcal{A})$.  Furthermore, restriction factors to an
isomorphism of $A\rtimes G(G\cdot u)$ onto $A(G\cdot u)\rtimes
G|_{G\cdot u}$.  
\end{prop}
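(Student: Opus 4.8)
The plan is to treat the two assertions in turn, first building the $C_0(G\unit/G)$-algebra structure and then identifying the fibres. Write $q:G\unit\to G\unit/G$ for the quotient map. For $\phi\in C_0(G\unit/G)$ and $f\in\Gamma_c(G,r^*\mcal{A})$ set $M_\phi f(\gamma)=\phi(G\cdot r(\gamma))f(\gamma)$; since $\gamma\mapsto\phi(G\cdot r(\gamma))$ is a bounded continuous scalar function on $G$, $M_\phi f$ again lies in $\Gamma_c(G,r^*\mcal{A})$. The first thing I would verify is that $M_\phi$ is a central multiplier on the dense $*$-algebra. Because the convolution integral defining $f*g(\gamma)$ runs over $\eta$ with $r(\eta)=r(\gamma)$, and because $r(\eta)$, $s(\eta)=r(\eta\inv\gamma)$ and $r(\gamma)$ all lie in the single orbit $G\cdot r(\gamma)$, the scalar $\phi(G\cdot r(\gamma))$ may be pulled in or out of either factor; a direct computation then gives $M_\phi(f*g)=(M_\phi f)*g=f*(M_\phi g)$, and from $f^*(\gamma)=\alpha_\gamma(f(\gamma\inv)^*)$ together with $G\cdot s(\gamma)=G\cdot r(\gamma)$ one gets $(M_\phi f)^*=M_{\bar\phi}(f^*)$. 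Thus $\phi\mapsto M_\phi$ is a $*$-homomorphism into the centralizers of $\Gamma_c(G,r^*\mcal{A})$.

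The crux of the first assertion is boundedness, which I would obtain by a positivity argument rather than by hand. Using the multiplier relations one computes $M_\phi(f)^* * M_\phi(f)=M_{|\phi|^2}(f^**f)$, and more generally $M_{\psi^2}(f^**f)=(M_\psi f)^* * (M_\psi f)\ge 0$ for every real $\psi\in C_b(G\unit/G)$. Applying this with $\psi=\sqrt{\|\phi\|_\infty^2-|\phi|^2}$ shows $M_{|\phi|^2}(f^**f)\le\|\phi\|_\infty^2\,(f^**f)$, whence $\|M_\phi f\|^2=\|M_{|\phi|^2}(f^**f)\|\le\|\phi\|_\infty^2\|f\|^2$. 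Therefore each $M_\phi$ extends to $A\rtimes_\alpha G$, the centralizer relations persist by continuity, and $\phi\mapsto M_\phi$ becomes a homomorphism of $C_0(G\unit/G)$ into $ZM(A\rtimes_\alpha G)$. Nondegeneracy is then immediate: given $f\in\Gamma_c(G,r^*\mcal{A})$ the set $q(r(\supp f))$ is compact in $G\unit/G$, so choosing $\phi\in C_c(G\unit/G)$ equal to $1$ there gives $M_\phi f=f$; since $\Gamma_c(G,r^*\mcal{A})$ is dense this establishes the stated $C_0(G\unit/G)$-algebra structure.

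For the second assertion I would identify the fibre $A\rtimes G(G\cdot u)$ with $(A\rtimes_\alpha G)/I_{G\cdot u}$, where $I_{G\cdot u}=\cspn\{M_\phi f:\phi(G\cdot u)=0\}$. Because $G\unit/G$ is Hausdorff the orbit $G\cdot u=q\inv(\{q(u)\})$ is a closed $G$-invariant set with open complement $U=G\unit\setminus G\cdot u$. Proposition~\ref{prop:11} and Theorem~\ref{thm:invtideal} then give that restriction $\rho$ is a surjection of $A\rtimes_\alpha G$ onto $A(G\cdot u)\rtimes G|_{G\cdot u}$ with $\ker\rho=\Ex(U)$, so everything reduces to proving $\Ex(U)=I_{G\cdot u}$. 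For $\Ex(U)\subseteq I_{G\cdot u}$, take $f\in\Gamma_c(G,r^*\mcal{A})$ with $\supp f\subset G|_U$; then $q(r(\supp f))$ is a compact subset of $G\unit/G$ missing $G\cdot u$, and Urysohn's lemma supplies $\phi\in C_c(G\unit/G)$ with $\phi\equiv 1$ on it and $\phi(G\cdot u)=0$, so $f=M_\phi f\in I_{G\cdot u}$. For the reverse inclusion, using that functions vanishing on a neighborhood of $G\cdot u$ are dense in the maximal ideal $\{\phi:\phi(G\cdot u)=0\}$, it suffices to treat $M_\phi f$ with $\phi$ vanishing on an open neighborhood $N$ of $G\cdot u$; then $M_\phi f$ vanishes on the open set $r\inv(q\inv(N))\supseteq G|_{G\cdot u}$, so $\supp(M_\phi f)\subset G|_U$ and $M_\phi f\in\Ex(U)$. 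Hence $\Ex(U)=I_{G\cdot u}$, and $\rho$ descends to the desired isomorphism of $A\rtimes G(G\cdot u)$ onto $A(G\cdot u)\rtimes G|_{G\cdot u}$.

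I expect the main obstacle to be the boundedness step: the algebraic multiplier identities are routine, but turning them into a genuine, bounded, central action of $C_0(G\unit/G)$ requires the positivity estimate above, and one must be careful that $\sqrt{\|\phi\|_\infty^2-|\phi|^2}$ is only bounded continuous rather than $C_0$, yet still carries $\Gamma_c$-sections into $\Gamma_c$-sections so that the relation $M_{\psi^2}(f^**f)=(M_\psi f)^* * (M_\psi f)$ applies. Once the $C_0(G\unit/G)$-action is in hand the identification of the fibre is comparatively formal, resting entirely on the exact sequence of Theorem~\ref{thm:invtideal} together with the two Urysohn/density arguments around the closed orbit $G\cdot u$.
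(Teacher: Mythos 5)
Your proof is correct and follows essentially the same route as the paper: a positivity argument for the bound $\|M_\phi f\|\le\|\phi\|_\infty\|f\|$ (the paper factors the positive element $\|\phi\|_\infty^2 1-\bar\phi\phi$ as $\xi^*\xi$ in the unitization, which is your explicit square root $\psi$), followed by identifying the fibre ideal $I_{G\cdot u}$ with $\Ex(G\unit\setminus G\cdot u)$ and invoking Theorem \ref{thm:invtideal}. The only difference is that you spell out the two Urysohn/density inclusions that the paper compresses into the phrase ``an approximation argument then shows that $I_u=\Ex(O)$.''
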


\begin{proof}
Define the action $\Phi(\phi)f = \phi\cdot f$ as above and 
extend $\Phi$ to the unitization $C_0(G\unit/G)^1$ of
$C_0(G\unit/G)$ in the obvious fashion.  It is straightforward
to show that $\Phi(\xi)$ is an
adjointable linear operator on $\Gamma_c(G,r^*\mcal{A})$ 
for each $\xi \in C_0(G\unit/G)^1$
and that $\Phi$ is a $*$-homomorphism.  
We would like to prove that
$\|\Phi(\phi)f\|\leq \|\phi\|_\infty \|f\|$ for all  $\phi\in
C_0(G\unit/G)$ and $f\in\Gamma_c(G,r^*\mcal{A})$.  It will suffice to
show that 
$
\|\phi\|_\infty^2 \langle f, f\rangle - \langle
\Phi(\phi)f,\Phi(\phi)f\rangle \geq 0
$
as elements of $A\rtimes G$, where $\langle f,g\rangle = f^* * g$.
However, using the fact that $\Phi$ is a homomorphism on
$C_0(G\unit/G)^1$, this amounts to showing 
\begin{equation}
\label{eq:11}
\langle \Phi(\|\phi\|_\infty^2 1 - \overline{\phi}\phi)f,f\rangle
\geq 0.
\end{equation}
All elements of the form $\|\phi\|_\infty^2 1 - \overline{\phi}\phi$
are positive in $C_0(G\unit/G)^1$ so there exists $\xi\in
C_0(G\unit/G)^1$ such that $\|\phi\|_\infty^2 1 - \overline{\phi}\phi
= \xi^*\xi$.  Therefore, we have 
\[
\langle \Phi(\|\phi\|_\infty^2 1 - \overline{\phi}\phi)f,f\rangle = 
\langle \Phi(\xi^*\xi)f,f\rangle = \langle \Phi(\xi)f,\Phi(\xi)f\rangle
\geq 0.
\]
It follows that $\Phi(\phi)$ is a bounded operator on
$\Gamma_c(G,r^*\mcal{A})$ with norm less than $\|\phi\|_\infty$ and that
$\Phi(\phi)$ extends to a multiplier on $A\rtimes_\alpha G$.  It is
easy to see that $\Phi$ is a nondegenerate $*$-homomorphism 
and that it maps into the center of the multiplier algebra.  Hence
$A\rtimes G$ is a $C_0(G\unit/G)$-algebra.  

Next, we identify the fibres of $A\rtimes G$.  Fix $u\in G\unit$.
Since $G\unit/G$ is Hausdorff, $G\cdot u$ is closed in $G\unit$.  Let
$O = G\unit\setminus G\cdot u$.  It is clear that $G\cdot u$ and $O$
are both $G$-invariant so that we may apply Theorem
\ref{thm:invtideal} to conclude that the restriction map $\rho$
factors to an isomorphism of $A\rtimes G/\Ex(O)$ with $A(G\cdot u)
\rtimes G|_{G\cdot u}$.  Now define 
\[
I_u = \cspn\{ \phi\cdot f : \phi\in C_0(G\unit/G),
f\in\Gamma_c(G,r^*\mcal{A}), \phi(G\cdot u) = 0\}.
\]
An approximation argument then shows that $I_u = \Ex(O)$.  The result follows
since, by definition, $A\rtimes G(G\cdot u) = A\rtimes G/I_u$. 
\end{proof}

The reason that this is a useful result is that we know a lot about
the fibres when $G\unit/G$ is Hausdorff. 

\begin{corr}
\label{cor:1}
Suppose $(A,G,\alpha)$ is a groupoid dynamical system and that the
orbit space $G\unit/G$ is Hausdorff.  Given $u\in G\unit$ the fibre
$A\rtimes G(G\cdot u)$ is Morita equivalent to $A(u)\rtimes S_u$.
\end{corr}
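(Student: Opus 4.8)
The plan is to combine Proposition~\ref{prop:5} with the transitive case already handled in Theorem~\ref{thm:transitive}. By Proposition~\ref{prop:5}, the restriction map factors to an isomorphism $A\rtimes G(G\cdot u)\cong A(G\cdot u)\rtimes_\alpha G|_{G\cdot u}$, so it suffices to show that the right-hand side is Morita equivalent to $A(u)\rtimes_\alpha S_u$. The key observation is that $G|_{G\cdot u}$ is a \emph{transitive} groupoid, so Theorem~\ref{thm:transitive} applies verbatim to the restricted dynamical system $(A(G\cdot u),G|_{G\cdot u},\alpha)$.

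First I would verify that this restricted system satisfies the hypotheses of Theorem~\ref{thm:transitive}. Since $G\unit/G$ is Hausdorff, the orbit $G\cdot u$ is closed in $G\unit$, and hence is a locally compact Hausdorff $G$-invariant subset. As noted at the start of the Invariant Ideals subsection, $G|_{G\cdot u}=r\inv(G\cdot u)=s\inv(G\cdot u)$ is then a locally compact Hausdorff groupoid whose Haar system is obtained by restricting $\lambda$, and it is second countable as a subspace of $G$. Transitivity is immediate: the unit space of $G|_{G\cdot u}$ is exactly $G\cdot u$, and any two points of a single orbit are joined by an element of $G$, which lies in $G|_{G\cdot u}$ by definition.

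It then remains to identify the data produced by Theorem~\ref{thm:transitive}. The stabilizer subgroup of $G|_{G\cdot u}$ over $u$ is $\{\gamma : s(\gamma)=r(\gamma)=u\}$; any such $\gamma$ already lies in $r\inv(G\cdot u)=G|_{G\cdot u}$, so this stabilizer coincides with $S_u$. Moreover the fibre of the $C_0(G\cdot u)$-algebra $A(G\cdot u)$ over the point $u\in G\cdot u$ is $A(u)$. Feeding these identifications into Theorem~\ref{thm:transitive} shows that $A(G\cdot u)\rtimes_\alpha G|_{G\cdot u}$ is Morita equivalent to $A(u)\rtimes_\alpha S_u$, and composing with the isomorphism of Proposition~\ref{prop:5} yields the corollary.

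There is essentially no serious obstacle here, as the corollary is really a packaging of two earlier results. The only points requiring care are the structural verifications above: that $G\cdot u$ is closed, which is precisely where the Hausdorff hypothesis on $G\unit/G$ enters and guarantees that $G|_{G\cdot u}$ is a genuine transitive groupoid carrying a Haar system, and that the stabilizer and fibre computed inside the restricted system agree with $S_u$ and $A(u)$ respectively. Once these are checked, the Morita equivalence is an immediate application of Theorem~\ref{thm:transitive}.
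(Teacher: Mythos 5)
Your argument is exactly the paper's: apply Proposition \ref{prop:5} to identify the fibre $A\rtimes G(G\cdot u)$ with $A(G\cdot u)\rtimes G|_{G\cdot u}$, observe that $G|_{G\cdot u}$ is transitive, and invoke Theorem \ref{thm:transitive}. Your additional verifications (closedness of the orbit, the Haar system on the restriction, and the identification of the stabilizer and fibre) are correct and simply make explicit what the paper leaves implicit.
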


\begin{proof}
The fibre of $A\rtimes G$ over $G\cdot u$ is, by the previous
proposition, isomorphic to $A(G\cdot u)\rtimes G|_{G\cdot u}$.
However, $G|_{G\cdot u}$ is a {\em transitive} groupoid so the result
follows from Theorem \ref{thm:transitive}.
\end{proof}

This allows us to take the first step in proving the main result. 

\begin{prop}
\label{prop:6}
Suppose $(A,G,\alpha)$ is a groupoid dynamical system and that the
orbit space $G\unit/G$ is Hausdorff.  Then every irreducible
representation of $A\rtimes_\alpha G$ is equivalent to one of the form
$\Ind_{S_u}^G R$ where $u\in G\unit$ and $R$ is an irreducible
representation of $A(u)\rtimes_\alpha S_u$.  
\end{prop}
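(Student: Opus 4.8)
The plan is to use the $C_0(G\unit/G)$-algebra structure furnished by Proposition \ref{prop:5} to reduce an arbitrary irreducible representation of $A\rtimes_\alpha G$ to the transitive situation already handled by Theorem \ref{thm:transitive}. Suppose $\pi$ is irreducible. By Proposition \ref{prop:5}, $C_0(G\unit/G)$ acts as central multipliers on $A\rtimes G$; since $\pi$ is irreducible, the canonical extension of $\pi$ to the multiplier algebra must send these central elements to scalars, so $C_0(G\unit/G)$ acts through the evaluation character at some orbit $G\cdot u$. Consequently $\pi$ annihilates the corresponding ideal and factors through the fibre $A\rtimes G(G\cdot u)$. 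By Proposition \ref{prop:5} again, the restriction map $\rho$ identifies this fibre with $A(G\cdot u)\rtimes_\alpha G|_{G\cdot u}$, and the groupoid $G|_{G\cdot u}$ is transitive.

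I would then apply Theorem \ref{thm:transitive} to the transitive system $(A(G\cdot u),G|_{G\cdot u},\alpha)$ at the point $u$. Noting that the stabilizer of $u$ in $G|_{G\cdot u}$ is exactly $S_u$ and that the fibre of $A(G\cdot u)$ over $u$ is $A(u)$, this produces an $A(G\cdot u)\rtimes G|_{G\cdot u}-A(u)\rtimes_\alpha S_u$-imprimitivity bimodule $\mcal{X}$. Since the Rieffel correspondence attached to an imprimitivity bimodule is a bijection preserving irreducibility, the irreducible representation of the fibre through which $\pi$ factors is equivalent to the representation $\mcal{X}\text{-}\Ind R$ induced via $\mcal{X}$ from some irreducible representation $R$ of $A(u)\rtimes_\alpha S_u$.

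What remains — and this is the crux — is to recognize $\mcal{X}$-induction composed with $\rho$ as the functor $\Ind_{S_u}^G$ of Theorem \ref{thm:induce}, which amounts to showing that the two Hilbert modules coincide. Taking $H=S_u$ in Proposition \ref{prop:1}, the unit space of $S_u$ is $\{u\}$, so $X=s\inv(u)=G_u$ and $\mcal{Z}_0=\Gamma_c(G_u,s^*\mcal{A})=C_c(G_u,A(u))=\mcal{X}_0$; moreover the right $A(u)\rtimes S_u$-valued inner products and right actions in Proposition \ref{prop:1} and Theorem \ref{thm:transitive} are given by identical formulas once the Haar system data are specialized to $u$, so $\mcal{Z}_{S_u}^G$ and $\mcal{X}$ agree as right Hilbert $A(u)\rtimes S_u$-modules. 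For the left action I would compare \eqref{eq:4} with the left action in Theorem \ref{thm:transitive}: for $z$ supported in $G_u$ and $\gamma\in G_u$ the integrand in \eqref{eq:4} vanishes unless $\eta$ has both range and source in $G\cdot u$, i.e. $\eta\in G|_{G\cdot u}$, so $\phi(f)z$ depends only on the restriction $\rho(f)$ and equals the $\mcal{X}$-action of $\rho(f)$. Hence $\Ind_{S_u}^G R=(\mcal{X}\text{-}\Ind R)\circ\rho$, and since $\mcal{X}\text{-}\Ind R$ is equivalent to the representation of the fibre through which $\pi$ factors, $\pi$ is equivalent to $\Ind_{S_u}^G R$ with $R$ irreducible.

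The main obstacle is precisely this last identification: the functor $\Ind_{S_u}^G$ is built from the multiplier-level homomorphism of Proposition \ref{prop:4} over all of $G$, whereas Theorem \ref{thm:transitive} produces a genuine imprimitivity bimodule over the transitive restriction $G|_{G\cdot u}$. Reconciling the two rests on the support and measure argument showing that the left $A\rtimes G$-action on $\mcal{Z}_0$ factors through $\rho$, which in turn uses that the restriction of $\lambda$ to $G|_{G\cdot u}$ is its Haar system. Everything else is routine bookkeeping with the formulas of Propositions \ref{prop:1} and \ref{prop:4}.
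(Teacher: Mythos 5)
Your argument is correct and takes essentially the same route as the paper: factor the irreducible representation through a fibre of the $C_0(G\unit/G)$-algebra $A\rtimes G$, apply Theorem \ref{thm:transitive} to the transitive restriction $G|_{G\cdot u}$, and identify $\mcal{X}$-induction composed with $\rho$ with $\Ind_{S_u}^G$. The paper simply packages the first step as a citation of general $C_0(X)$-algebra theory and the last step as Lemma \ref{lem:1}, whose proof is precisely your comparison of $\mcal{Z}_{S_u}^G$ and $\mcal{X}$ as right Hilbert modules together with the observation that the left action \eqref{eq:4} on sections supported in $G_u$ only sees $f|_{G|_{G\cdot u}}$.
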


In the case where $G\unit/G$ is Hausdorff, every irreducible
representation of $A\rtimes G$ is lifted from a fibre $A\rtimes
G(G\cdot u)$, and every irreducible representation of $A\rtimes
G(G\cdot u)$ comes from an irreducible representation of $A(u)\rtimes
S_u$.  It turns out that this two stage description is nothing more
than the usual induction process. 

\begin{lemma}
\label{lem:1}
Suppose $(A,G,\alpha)$ is a groupoid dynamical system and that the
orbit space $G\unit/G$ is Hausdorff.  Given $u\in G\unit$ let $\rho :
A\rtimes G \rightarrow A(G\cdot u)\rtimes G|_{G\cdot u}$ be the
extension of the restriction map on $\Gamma_c(G,r^*\mcal{A})$.
Furthermore, let $\mcal{X}$ be the $A(G\cdot u)\rtimes G|_{G\cdot u} -
A(u)\rtimes S_u$ imprimitivity bimodule from Theorem
\ref{thm:transitive}.  If $R$ is a representation of $A(u)\rtimes S_u$
then $\Ind_{S_u}^G R = \mcal{X}-\Ind(R)\circ \rho$.
\end{lemma}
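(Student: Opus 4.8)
The plan is to realize both sides of the claimed identity on the same right Hilbert $A(u)\rtimes S_u$-module and then to match the two left actions of $A\rtimes G$, the second only after composing with $\rho$. First I would identify the modules. Taking $H=S_u$ in Proposition \ref{prop:1}, we have $X=s\inv(S_u\unit)=s\inv(u)=G_u$; since the source map is constantly $u$ on $G_u$, the bundle $s^*\mcal{A}$ has constant fibre $A(u)$, so $\mcal{Z}_0=\Gamma_c(G_u,s^*\mcal{A})=C_c(G_u,A(u))=\mcal{X}_0$. Specializing the right $A(u)\rtimes S_u$-action and the inner product $\llangle\,\cdot\,,\cdot\,\rrangle_{A\rtimes H}$ of Proposition \ref{prop:1} to $H=S_u$---so that $\lambda_H^{s(\gamma)}=\beta$ and the remaining integrals run over $G_u$---reproduces exactly the formulas of Theorem \ref{thm:transitive}. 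Hence the two completions are the same right Hilbert $A(u)\rtimes S_u$-module, and the induced representations act on the same Hilbert space $\mcal{Z}_{S_u}^G\otimes_{A(u)\rtimes S_u}\mcal{H}=\mcal{X}\otimes_{A(u)\rtimes S_u}\mcal{H}$.

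The heart of the argument is the comparison of left actions. Let $\Lambda$ denote the left action of $A(G\cdot u)\rtimes G|_{G\cdot u}$ on $\mcal{X}$ coming from Theorem \ref{thm:transitive}, and let $\phi$ be the action of $A\rtimes G$ on $\mcal{Z}_{S_u}^G$ from Proposition \ref{prop:4}. For $f\in\Gamma_c(G,r^*\mcal{A})$, $z\in\mcal{Z}_0$ and $\gamma\in G_u$, I would evaluate \eqref{eq:4}: the integral runs over $\eta\in G^{r(\gamma)}$, and because $\gamma\in G_u$ forces $r(\gamma)\in G\cdot u$, every such $\eta$ satisfies $r(\eta)=r(\gamma)\in G\cdot u$, i.e.\ $\eta\in G|_{G\cdot u}$. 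Thus $f(\eta)=\rho(f)(\eta)$ throughout the integral, and since the Haar system on the transitive groupoid $G|_{G\cdot u}$ is the restriction of $\lambda$, comparison with the left-action formula of Theorem \ref{thm:transitive} gives $\phi(f)z=\Lambda(\rho(f))z$ on $\mcal{Z}_0$. As $\phi$ and $\Lambda\circ\rho$ are norm-bounded $*$-homomorphisms of $A\rtimes G$ into $\mcal{L}(\mcal{Z}_{S_u}^G)=\mcal{L}(\mcal{X})$ agreeing on the dense subalgebra $\Gamma_c(G,r^*\mcal{A})$, they coincide on all of $A\rtimes G$.

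Combining the two steps, for $f\in\Gamma_c(G,r^*\mcal{A})$, $z\in\mcal{Z}_0$ and $h\in\mcal{H}$ we obtain
\[
\Ind_{S_u}^G R(f)(z\otimes h)=\phi(f)z\otimes h=\Lambda(\rho(f))z\otimes h=\bigl(\mcal{X}\text{-}\Ind(R)\circ\rho\bigr)(f)(z\otimes h),
\]
which holds on a dense subspace and therefore everywhere. The conceptual step, and the one I would emphasize, is the support observation that the $G$-induction formula \eqref{eq:4} depends on $f$ only through its restriction $\rho(f)$ to $G|_{G\cdot u}$; granting this, the rest is bookkeeping. The points needing care are that the right-module structures coincide \emph{on the nose}, so that the two completions are the same module rather than merely isomorphic, and that the identity $\phi=\Lambda\circ\rho$ proved on $\Gamma_c(G,r^*\mcal{A})$ extends to the completion---both of which follow from density together with the boundedness of $\phi$, $\rho$ and $\Lambda$.
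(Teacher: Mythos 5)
Your proposal is correct and follows essentially the same route as the paper: identify $\mcal{Z}_{S_u}^G$ and $\mcal{X}$ as the same right Hilbert $A(u)\rtimes S_u$-module, check that the two left actions agree on $\Gamma_c(G,r^*\mcal{A})$, and extend by continuity. The paper leaves the comparison of left actions as ``an elementary calculation,'' and your support observation --- that \eqref{eq:4} evaluated at $\gamma\in G_u$ only sees $f$ on $G|_{G\cdot u}$, i.e.\ only through $\rho(f)$ --- is precisely the content of that calculation.
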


\begin{proof}
Let $\mcal{Z}_{S_u}^G$ be the imprimitivity bimodule coming from
Proposition \ref{prop:1} and $\mcal{X}$ the imprimitivity bimodule
from Theorem \ref{thm:transitive}.  
This lemma is simple enough once we observe that
$\mcal{X}$ and $\mcal{Z}$ are equal as right Hilbert
modules.  Hence if $R$ acts on $\mcal{H}$ then both 
$\Ind_{S_u}^G R$ and $\mcal{X}-\Ind(R) \circ \rho$ act on
$\mcal{Z}_{S_u}^G \otimes_{A(u)\rtimes S_u} \mcal{H}$.  From here an 
elementary calculation shows that $\Ind_{S_u}^G R(f) =
\mcal{X}-\Ind(R)(\rho(f))$ for $f\in \Gamma_c(G,r^*\mcal{A})$ and this
extends to the entire crossed product by continuity. 
\end{proof}

With this result at are disposal we are mostly done. 

\begin{proof}[Proof of Proposition \ref{prop:6}]
Suppose $(A,G,\alpha)$ is a groupoid dynamical system and that the
orbit space $G\unit/G$ is Hausdorff.  By Proposition \ref{prop:5},
$A\rtimes G$ is a $C_0(G\unit/G)$-algebra.  It then follows from
general $C_0(X)$-algebra theory \cite[Proposition C.5]{tfb2} that any
irreducible representation $T$ is of the form $T = L\circ \rho$ where
$u\in G\unit$, $L$ is an irreducible representation of $A(G\cdot
u)\rtimes G|_{G\cdot u}$ and $\rho$ is the canonical extension of the
restriction map.  However, $A(G\cdot u) \rtimes G|_{G\cdot u}$ is
Morita Equivalent to $A(u)\rtimes S_u$ by Corollary \ref{cor:1} and 
therefore there is an irreducible
representation $R$ of $A(u)\rtimes S_u$ such that $L$ is equivalent to
$\mcal{X}-\Ind R$ \cite[Section 3.3]{tfb}.  
It then follows that $T = L\circ \rho$ and
$\Ind_{S_u}^G R = \mcal{X}-\Ind(R)\circ \rho$ are equivalent and we
are done. 
\end{proof}

As an aside, the following corollary is useful and follows quickly
from Lemma \ref{lem:1}, although we will not detail the proof here.  

\begin{corr}
\label{cor:2}
Suppose $(A,G,\alpha)$ is a groupoid dynamical system and that the
orbit space $G\unit/G$ is Hausdorff.  If $R$ is an irreducible
representation of $A(u)\rtimes S_u$ then $\Ind_{S_u}^G R$ is
irreducible.  Furthermore, if $L$ and $R$ are irreducible
representations of $A(u)\rtimes S_u$ and $\Ind_{S_u}^G R$ is
equivalent to $\Ind_{S_u}^G L$ then $R$ is equivalent to $L$. 
\end{corr}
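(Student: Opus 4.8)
The plan is to exploit the factorization of the induction functor supplied by Lemma \ref{lem:1} and then observe that each stage of that factorization is well behaved with respect to both irreducibility and unitary equivalence. By Lemma \ref{lem:1} we have $\Ind_{S_u}^G R = \mcal{X}-\Ind(R)\circ\rho$, where $\rho:A\rtimes G\rightarrow A(G\cdot u)\rtimes G|_{G\cdot u}$ is the extension of the restriction map and $\mcal{X}$ is the imprimitivity bimodule of Theorem \ref{thm:transitive}. Thus $\Ind_{S_u}^G$ is the composite of the Rieffel induction $\mcal{X}-\Ind$, carrying representations of $A(u)\rtimes S_u$ to representations of $A(G\cdot u)\rtimes G|_{G\cdot u}$, followed by the pullback $L\mapsto L\circ\rho$. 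The strategy is to show that each of these two operations preserves irreducibility and reflects equivalence, and then simply compose.

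First I would record the two standard principles. Because $\mcal{X}$ is an imprimitivity bimodule, the Rieffel correspondence \cite[Section 3.3]{tfb} guarantees that $R\mapsto\mcal{X}-\Ind(R)$ sends irreducible representations to irreducible representations and descends to a bijection on unitary equivalence classes; in particular it both preserves and reflects unitary equivalence of irreducibles. Second, I would use that $\rho$ is a \emph{surjective} $*$-homomorphism (Proposition \ref{prop:11}, together with the identification in Proposition \ref{prop:5}). Surjectivity gives $(L\circ\rho)(A\rtimes G) = L\bigl(A(G\cdot u)\rtimes G|_{G\cdot u}\bigr)$ for any representation $L$ of the quotient, so $L\circ\rho$ has exactly the same commutant as $L$; hence $L\circ\rho$ is irreducible whenever $L$ is. Moreover, any unitary intertwining $L\circ\rho$ with $L'\circ\rho$ automatically intertwines $L$ with $L'$ because every element of the quotient is of the form $\rho(f)$, so the pullback also reflects equivalence.

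Putting the pieces together then settles both assertions. If $R$ is irreducible, then $\mcal{X}-\Ind(R)$ is irreducible by the Rieffel correspondence, and composing with $\rho$ keeps it irreducible, so $\Ind_{S_u}^G R$ is irreducible. For the injectivity statement, suppose $\Ind_{S_u}^G R\cong\Ind_{S_u}^G L$; by Lemma \ref{lem:1} this reads $\mcal{X}-\Ind(R)\circ\rho\cong\mcal{X}-\Ind(L)\circ\rho$. Reflecting equivalence through the surjection $\rho$ yields $\mcal{X}-\Ind(R)\cong\mcal{X}-\Ind(L)$, and reflecting equivalence through the Rieffel correspondence yields $R\cong L$.

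I do not anticipate a genuine obstacle here, since everything is a formal consequence of Lemma \ref{lem:1} combined with two well-known facts; this is why the paper remarks that the corollary "follows quickly." The only point deserving care is the claim that the pullback along $\rho$ \emph{reflects}, and not merely preserves, unitary equivalence of irreducibles, and this is precisely the step where surjectivity of $\rho$ is essential.
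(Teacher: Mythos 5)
Your proof is correct and is exactly the argument the paper has in mind: the paper omits the details, saying only that the corollary ``follows quickly'' from Lemma \ref{lem:1}, and your decomposition of $\Ind_{S_u}^G$ into the Rieffel induction $\mcal{X}-\Ind$ (which preserves irreducibility and reflects equivalence) followed by pullback along the surjection $\rho$ (which does the same, precisely because of surjectivity) is the intended quick route. You also correctly identify the one point needing care, namely that reflecting equivalence through $\rho$ requires its surjectivity, which is supplied by Proposition \ref{prop:11}.
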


We will now prove Theorem \ref{thm:liftstab} by extending
Proposition \ref{prop:6} to groupoids which satisfy the Mackey-Glimm
Dichotomy.  As mentioned in the introduction, there are a number of
conditions which are all equivalent and collectively make up the
dichotomy \cite{groupoiddichotomy}. The most useful condition for our
purposes will be the fact that $G$ is regular if and only if
$G\unit/G$ is almost Hausdorff.  

\begin{definition}
\label{def:1}
A, not necessarily Hausdorff, locally compact space $X$ is said to be
{\em almost Hausdorff} if each locally compact subspace $V$ contains
a relatively open nonempty Hausdorff subset. 
\end{definition}

The key fact we will use concerning almost Hausdorff spaces is the
following proposition, which we cite without proof. 

\begin{prop}[{\cite[Lemma 6.3]{tfb2}}] 
\label{prop:7}
Suppose $X$ is a, not necessarily Hausdorff, locally compact space.
Then $X$ is almost Hausdorff if and only if there is an ordinal
$\gamma$ and open sets $\{U_\alpha : \alpha \leq \gamma\}$ such that 
\begin{enumerate}
\item $\alpha < \beta < \gamma$ implies that $U_\alpha \subsetneq
  U_\beta$,
\item $\alpha < \gamma$ implies that $U_{\alpha +1}\setminus U_\alpha$
  is a dense Hausdorff subspace of $X\setminus U_\alpha$, 
\item if $\delta \leq \gamma$ is a limit ordinal then 
\[
U_\delta = \bigcup_{\alpha < \delta} U_\alpha, 
\]
\item $U_0 = \emptyset$ and $U_\gamma = X$.
\end{enumerate}
\end{prop}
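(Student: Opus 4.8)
The plan is to prove both implications, with the real content concentrated in the forward direction, which rests on a single density lemma.

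The backward implication is the easy one. Given the filtration $\{U_\alpha\}$ and a nonempty locally compact subspace $V$, I would let $\alpha$ be the least ordinal with $V\cap U_\alpha\neq\emptyset$; this exists because $U_\gamma=X$. Since $U_0=\emptyset$ we have $\alpha>0$, and condition (c) forces $\alpha$ to be a successor, say $\alpha=\beta+1$: at a limit ordinal $\delta$ one has $V\cap U_\delta=\bigcup_{\rho<\delta}(V\cap U_\rho)$, so $V\cap U_\delta\neq\emptyset$ would already put some $\rho<\delta$ in the list, violating minimality. Minimality also gives $V\cap U_\beta=\emptyset$, whence $V\cap U_{\beta+1}=V\cap(U_{\beta+1}\setminus U_\beta)$. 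This set is nonempty, it is relatively open in $V$ because $U_{\beta+1}$ is open in $X$, and it is Hausdorff as a subspace of the Hausdorff space $U_{\beta+1}\setminus U_\beta$ provided by (b). That is exactly the witness required by Definition \ref{def:1}.

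For the forward implication I would build the $U_\alpha$ by transfinite recursion, the engine being the following lemma: every nonempty locally compact almost Hausdorff space $Y$ admits a dense open Hausdorff subspace. To prove it, consider the family of open Hausdorff subsets of $Y$ ordered by inclusion. The union of a chain of such sets is again open and Hausdorff, since any two of its points lie in a common member of the chain and can there be separated by sets that remain open in $Y$; hence Zorn's Lemma yields a maximal open Hausdorff subset $V$. The crucial point is that $V$ is dense. If it were not, $W:=Y\setminus\overline{V}$ would be a nonempty open, hence locally compact, subspace, and Definition \ref{def:1} applied to $W$ would produce a nonempty relatively open Hausdorff $H\subseteq W$. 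Since $W$ is open in $Y$, the set $H$ is open in $Y$ and disjoint from $V$. Then $V\cup H$ is open and, because $V$ and $H$ are \emph{disjoint open} Hausdorff sets, it is Hausdorff: two points in the same piece are separated there, while a point of $V$ and a point of $H$ are separated by $V$ and $H$ themselves. This contradicts maximality, so $V$ is dense.

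With the lemma in hand I would set $U_0=\emptyset$ and, at a successor stage with $U_\alpha\neq X$, apply the lemma to the closed, hence locally compact, subspace $Y_\alpha=X\setminus U_\alpha$ to obtain a dense open Hausdorff $V_\alpha=Y_\alpha\cap O_\alpha$ with $O_\alpha$ open in $X$; I then put $U_{\alpha+1}=U_\alpha\cup O_\alpha$, which is open with $U_{\alpha+1}\setminus U_\alpha=V_\alpha$, giving (a) and (b). At limit stages I take unions, which is (c). Because the $U_\alpha$ strictly increase at successors, a cardinality argument forces the recursion to reach $X$ at some least ordinal $\gamma$, and (d) holds by construction. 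The main obstacle, and the place I would argue most carefully, is the density lemma, namely both the verification that chains have Hausdorff unions (so that Zorn applies) and the disjoint open union argument showing that maximal open Hausdorff sets are dense. A secondary technical point is to confirm that in this possibly non-Hausdorff setting open and closed subspaces of a locally compact space remain locally compact, so that Definition \ref{def:1} may legitimately be invoked on $W$ and on $Y_\alpha$.
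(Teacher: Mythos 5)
The paper does not prove this proposition at all: it is quoted verbatim from \cite[Lemma 6.3]{tfb2} and explicitly ``cited without proof,'' so there is no in-paper argument to compare against. Judged on its own, your proof is correct and is essentially the standard argument for this dichotomy (and the one used in the cited reference): the backward direction by taking the least $\alpha$ with $V\cap U_\alpha\neq\emptyset$, ruling out limits via (c), and landing in a slice $U_{\beta+1}\setminus U_\beta$; the forward direction by transfinite recursion powered by the lemma that a nonempty locally compact almost Hausdorff space has a dense open Hausdorff subset, obtained from a Zorn-maximal open Hausdorff subset together with the observation that a disjoint union of two open Hausdorff subsets is open and Hausdorff. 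Two small points are worth making explicit rather than implicit. First, at each successor stage you invoke the density lemma for $Y_\alpha=X\setminus U_\alpha$, so you need $Y_\alpha$ to be almost Hausdorff; this is immediate because the subspace topology is transitive, so every locally compact subspace of $Y_\alpha$ is a locally compact subspace of $X$ and Definition \ref{def:1} applies to it directly. Second, the local-compactness bookkeeping you flag at the end is genuinely needed in the non-Hausdorff setting: with the convention (as in \cite{tfb2}) that locally compact means every point has a neighborhood base of compact sets, open subspaces inherit this by restricting to compact neighborhoods contained in the open set, and closed subspaces by intersecting compact neighborhoods with the closed set. With those two remarks added, the argument is complete.
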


The main reason we care about Proposition \ref{prop:7} is that it
allows us to build a composition series so that we may make use of the
following

\begin{lemma}[{\cite[Lemma 8.13]{tfb2}}]
\label{lem:3}
Suppose $\{I_\alpha\}_{\alpha\in\Lambda}$ is a composition series for
a $C^*$-algebra $A$.  Then every irreducible representation $\pi$ of
$A$ lives on a sub-quotient $I_{\alpha+1}/I_\alpha$ for some $\alpha$. 
\end{lemma}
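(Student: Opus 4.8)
The plan is to run a least-ordinal argument on the kernel of $\pi$. Writing the composition series as $\{I_\alpha\}_{\alpha\leq\gamma}$ with $I_0 = 0$ and $I_\gamma = A$, I would begin by noting that since $\pi$ is irreducible it is nonzero, so $\pi(A)\neq 0$. Hence the set $\{\alpha : \pi(I_\alpha)\neq 0\}$ is a nonempty collection of ordinals (it contains $\gamma$) and therefore has a least element $\beta$. First I would record that $0$ does not belong to this set, since $\pi(I_0) = \pi(0) = 0$, so $\beta > 0$; and that by minimality $\pi(I_\delta) = 0$, i.e. $I_\delta\subseteq\ker\pi$, for every $\delta < \beta$.

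Next I would show that $\beta$ is a successor ordinal. The defining feature of a composition series that makes this work is the behaviour at limit ordinals: if $\delta$ is a limit ordinal then $I_\delta = \overline{\bigcup_{\alpha<\delta} I_\alpha}$. So if $\beta$ were a limit ordinal, then using $I_\alpha\subseteq\ker\pi$ for all $\alpha<\beta$ together with the fact that $\ker\pi$ is a closed ideal, I would conclude $I_\beta = \overline{\bigcup_{\alpha<\beta} I_\alpha}\subseteq\ker\pi$, i.e. $\pi(I_\beta) = 0$, contradicting the choice of $\beta$. Therefore $\beta = \alpha+1$ for some ordinal $\alpha$, and by construction $\pi(I_\alpha) = 0$ while $\pi(I_{\alpha+1})\neq 0$.

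Finally I would translate this into the statement that $\pi$ lives on the subquotient $I_{\alpha+1}/I_\alpha$. Because $I_\alpha\subseteq\ker\pi$, the representation $\pi$ factors through the quotient $A/I_\alpha$ as an irreducible representation $\widetilde\pi$, and the image of $I_{\alpha+1}$ in $A/I_\alpha$ is precisely the closed ideal $J := I_{\alpha+1}/I_\alpha$. The condition $\pi(I_{\alpha+1})\neq 0$ then says exactly that $\widetilde\pi(J)\neq 0$. At this point I would invoke the standard fact that an irreducible representation of a $C^*$-algebra which does not vanish on a closed two-sided ideal restricts to an irreducible representation of that ideal, of which it is, up to equivalence, the unique extension. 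This is precisely the assertion that $\pi$ lives on $I_{\alpha+1}/I_\alpha$.

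The main obstacle is really just the careful handling of the limit-ordinal case, where one must combine the explicit description of $I_\delta$ in a composition series with the closedness of $\ker\pi$; the only other nonroutine ingredient is the ideal-restriction theorem for irreducible representations, which is standard $C^*$-algebra theory. Everything else is ordinal bookkeeping.
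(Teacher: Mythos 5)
Your argument is correct and is exactly the standard proof of this fact; the paper itself does not prove the lemma but cites it from \cite[Lemma 8.13]{tfb2}, where the argument is precisely your least-ordinal analysis (ruling out the limit case via closedness of $\ker\pi$ and the closure condition at limit ordinals, then invoking the ideal-restriction theorem for irreducible representations). No gaps.
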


At this point the way forward is clear.  If $G\unit/G$ is almost
Hausdorff then we will build a composition series of crossed products
where the orbit space associated to the sub-quotients is Hausdorff.
This will allow us to use Proposition \ref{prop:6} to prove Theorem
\ref{thm:liftstab}.  The heart of the argument is that
the multistage process of taking a representation of $A(u)\rtimes S_u$
to $A\rtimes G$ through the composition series is
equivalent to the usual induction process.  

\begin{lemma}
\label{lem:2}
Suppose $(A,G,\alpha)$ is a groupoid dynamical system and
that $U\subset V \subset G\unit$ are open $G$-invariant sets.  Then
${A(V\setminus U)\rtimes G|_{V\setminus U}}$ is naturally isomorphic
to the sub-quotient $\Ex(V)/\Ex(U)$.  Furthermore, if $u\in V\setminus
U$ and $R$ is a
representation of $A(u)\rtimes S_u$ then the canonical extension of
$\Ind_{S_u}^{G|_{V\setminus U}}R$ to $A\rtimes G$ is equal to
$\Ind_{S_u}^G R$.  
\end{lemma}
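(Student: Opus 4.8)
The plan is to prove the two assertions separately, first establishing the isomorphism of the sub-quotient with a restricted crossed product and then checking the compatibility of induction.

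For the first assertion I would regard $U$ as an open $G|_V$-invariant subset of the unit space $V$ of the restricted groupoid $G|_V$, with closed $G|_V$-invariant complement $V\setminus U$. Applying Theorem \ref{thm:invtideal} to the system $(A(V),G|_V,\alpha)$ produces a short exact sequence whose quotient map $\rho_V$ is restriction and satisfies $\ker\rho_V = \Ex_V(U)$, the extension ideal computed inside $A(V)\rtimes G|_V$; hence $\rho_V$ descends to an isomorphism $(A(V)\rtimes G|_V)/\Ex_V(U)\cong A(V\setminus U)\rtimes G|_{V\setminus U}$. On the other hand, Proposition \ref{prop:10} gives an isometric isomorphism $\iota_V:A(V)\rtimes G|_V\to \Ex(V)$, and since $\iota_V$ is just inclusion on function algebras it carries the dense set of sections supported in $G|_U$ onto the dense set defining $\Ex(U)$; as $\Ex(U)\subset\Ex(V)$ and both are ideals of $A\rtimes G$, this shows $\iota_V(\Ex_V(U))=\Ex(U)$. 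Therefore $\iota_V$ factors to an isomorphism $(A(V)\rtimes G|_V)/\Ex_V(U)\cong\Ex(V)/\Ex(U)$, and composing yields the natural isomorphism $A(V\setminus U)\rtimes G|_{V\setminus U}\cong\Ex(V)/\Ex(U)$. Tracing a generator, it carries the class of $\bar f := f|_{G|_{V\setminus U}}$ to the class of $\iota_V(f)$ for $f\in\Gamma_c(G|_V,r^*\mcal A)$.

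For the second assertion, recall that the canonical extension of $\bar L:=\Ind_{S_u}^{G|_{V\setminus U}}R$ to $A\rtimes G$ is obtained by pulling $\bar L$ back through the quotient map $\Ex(V)\to\Ex(V)/\Ex(U)$ to a nondegenerate representation $L$ of the ideal $\Ex(V)$, and then extending $L$ to $A\rtimes G$. Because a nondegenerate representation of an ideal has a \emph{unique} extension to the ambient algebra, it suffices to show that the restriction of $\Ind_{S_u}^G R$ to $\Ex(V)$ equals $L$; the desired equality then follows from uniqueness. Under the identification of the first part, $L(\iota_V(f))=\bar L(\bar f)=\Ind_{S_u}^{G|_{V\setminus U}}R(\bar f)$ for $f\in\Gamma_c(G|_V,r^*\mcal A)$, so I am reduced to comparing the two induced representations on such elements. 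The key point is that both act on the \emph{same} space: since $S_u$ is the isotropy group of $u$ in both $G$ and $G|_{V\setminus U}$ and $u\in V\setminus U$, the underlying right Hilbert $A(u)\rtimes S_u$-module $\mcal Z_0=\Gamma_c(G_u,s^*\mcal A)$, together with its inner products and right action, involves only the fibre $G_u$ and integrals over $G_u$ and $S_u$, all depending solely on the orbit $G\cdot u$. As $V\setminus U$ is $G$-invariant and contains $u$, we have $G\cdot u\subset V\setminus U$, so these data are identical for the two groupoids; thus both representations act on $\mcal Z_{S_u}^G\otimes_{A(u)\rtimes S_u}\mcal H$. It then remains to check that the action formulas agree: for $z\in\mcal Z_0$ and $\gamma\in G_u$ one has $r(\gamma)\in G\cdot u\subset V\setminus U$, so the entire Haar fibre $\lambda^{r(\gamma)}$ lies in $r\inv(V\setminus U)=G|_{V\setminus U}$, and on this fibre $\iota_V(f)$ agrees with $\bar f$. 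Comparing \eqref{eq:1} for $\Ind_{S_u}^G R(\iota_V(f))$ with the corresponding formula for $\Ind_{S_u}^{G|_{V\setminus U}}R(\bar f)$ shows they coincide on $\mcal Z_0$, hence on the dense set of elementary tensors, and an inductive limit topology argument extends the identity to all of $\Ex(V)$.

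The main obstacle is bookkeeping rather than conceptual: in the first part one must carefully track the three nested crossed products and their two distinct extension ideals $\Ex_V(U)$ and $\Ex(U)$, while in the second part one must verify that passing from $G$ to $G|_{V\setminus U}$ leaves every ingredient of the induction untouched. Both difficulties dissolve once one uses the $G$-invariance of $V\setminus U$ together with $u\in V\setminus U$ to confine all the relevant fibres and integrals to the orbit $G\cdot u$.
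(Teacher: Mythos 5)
Your proposal is correct and follows essentially the same route as the paper: the first part is the same chain of isomorphisms obtained by applying Theorem \ref{thm:invtideal} to $(A(V),G|_V,\alpha)$ and then transporting through the inclusion $\iota_V$ of Proposition \ref{prop:10}, and the second part rests on the same key observation that $\mcal{Z}_{S_u}^{G|_{V\setminus U}}$ and $\mcal{Z}_{S_u}^G$ coincide as right Hilbert modules (since $G_u$ and all relevant Haar fibres lie in $G|_{V\setminus U}$) with both representations given by \eqref{eq:1}. You merely make explicit two points the paper leaves implicit --- the uniqueness of nondegenerate extensions from the ideal $\Ex(V)$ and the confinement of the integrals to the orbit $G\cdot u$ --- which is a welcome amplification rather than a different argument.
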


\begin{proof}
Recall that $A(V)\rtimes G|_V$ is
isomorphic to the ideal $\Ex(V)$ in $A\rtimes G$ via the inclusion map
$\iota$. Let $\Ex'(U)$ be the isomorphic image of 
$A(U)\rtimes G|_U$ in $A(V)\rtimes G|_V$ and let $\Ex(U)$ be the image
of $A(U)\rtimes G|_U$ in $A\rtimes G$.  
It is easy to show that $\iota(\Ex'(U))=\Ex(U)$.  It then follows from
Theorem \ref{thm:invtideal} that we have the following chain of isomorphisms
\[
A(V\setminus U)\rtimes G|_{V\setminus U} \cong_{\rho} A(V)\rtimes G|_{V}/
\Ex(U') \cong_\iota \Ex(V)/\Ex(U)
\]
where $\rho$ is given by restriction and $\iota$ is given by
inclusion.

Now suppose $R$ is a representation of $A(u)\rtimes S_u$ on $\mcal{H}$
for $u\in V\setminus U$.  Then $\Ind_{S_u}^{G|_{V\setminus U}} R$ acts
on $\mcal{Z}_{S_u}^{G|_{V\setminus U}}\otimes_{A(u)\rtimes S_u}
\mcal{H}$ and $\Ind_{S_u}^{G} R$ acts
on $\mcal{Z}_{S_u}^{G}\otimes_{A(u)\rtimes S_u}\mcal{H}$.  However,
$\mcal{Z}_{S_u}^{G|_{V\setminus U}}$ and $\mcal{Z}_{S_u}^G$ are 
actually equal as right Hilbert modules so 
that $\Ind_{S_u}^{G|_{V\setminus U}} R$ and $\Ind_{S_u}^G R$ act on
the same space.  Furthermore, both representations are defined on
their respective function algebras via \eqref{eq:1}.  
Thus, after we identify $\Gamma_c(G|_{V\setminus U},r^*\mcal{A})$ as a
subset of $\Gamma_c(G,r^*\mcal{A})$ and untangle the necessary
definitions, it follows that the canonical
extension of $\Ind_{S_u}^{G|_{V\setminus U}} R$ to $A\rtimes G$ is
equal to $\Ind_{S_u}^G R$.  
\end{proof}

We can now prove the main result of the paper. 

\begin{proof}[Proof of Theorem \ref{thm:liftstab}]
Since $G$ is a second countable, locally compact Hausdorff groupoid,
the fact that $G$ is regular implies that $G\unit/G$ is almost
Hausdorff \cite{groupoiddichotomy}.  Therefore there
are open sets $\{V_\beta\}_{0\leq \beta \leq \gamma}$ in $G\unit/G$
satisfying properties (a)-(d) of Proposition \ref{prop:7}.  Let
$q:G\unit\rightarrow G\unit/G$ be the quotient map and $U_\beta =
q\inv(V_\beta)$ for all $0\leq \beta \leq \gamma$.  Then each
$U_\beta$ is an open $G$-invariant subset and we define $I_\beta =
\Ex(U_\beta)$.  It is clear that $I_0 = \{0\}$, $I_\gamma = A\rtimes
G$, and if $\delta < \beta \leq \gamma$ then $I_\delta \subsetneq
I_\beta$.  Finally, suppose $\delta \leq \gamma$ is a limit ordinal and
$f\in \Gamma_c(G|_{U_\delta},r^*\mcal{A})$.  Then $\{U_\beta\}_{\beta
  < \delta}$ is an open cover of $r(\supp f)$.  Since $r(\supp f)$ is
compact there must be a finite subcover, and since the $U_\beta$ are
nested, this implies that there exists $\beta' < \delta$ such that
$r(\supp f) \subset U_{\beta'}$.  Hence $f\in
\Gamma_c(G|_{U_{\beta'}},r^*\mcal{A})\subset I_{\beta'}$.  It now
  follows quickly that 
\[
I_\delta = \overline{\bigcup_{\beta < \delta} I_\beta}.
\]
Thus $\{I_\beta\}$ is a composition series for $A\rtimes G$.  

Suppose $L$ is an irreducible representation of $A\rtimes G$.
Lemma \ref{lem:3} implies that there exists $\beta$ such that $L$
lives on $I_{\beta+1}/I_\beta$.  In other words, there is an
irreducible representation $T$ of $I_{\beta+1}/I_\beta$ such that $L$
is the unique canonical extension of $T$.  Next, Lemma \ref{lem:2}
implies that we can identify $I_{\beta+1}/I_\beta$ with
$A(U_{\beta+1}\setminus U_\beta)\rtimes G|_{U_{\beta+1}\setminus
  U_\beta}$.  Furthermore, $q(U_{\beta+1}\setminus
U_\beta) = V_{\beta+1}\setminus V_\beta$ is Hausdorff, so that by
Proposition \ref{prop:6} there exists $u\in U_{\beta+1}\setminus
U_\beta$ and an irreducible representation $R$ of $A(u)\rtimes S_u$
such that $T$ is equivalent to $R' =
\Ind_{S_u}^{G|_{U_{\beta+1}\setminus U_\beta}} R$.  Hence, the
extension of $T$ to $A\rtimes G$, which is $L$, is equivalent to the
extension of $R'$ to $A\rtimes G$, which is $\Ind_{S_u}^G R$ by Lemma
\ref{lem:2}.  
\end{proof}

We finish by using Lemma \ref{lem:2} to sift out a useful fact. 

\begin{prop}
\label{prop:8}
Suppose $(A,G,\alpha)$ is a groupoid dynamical system and $G$ is
regular.  If $R$ is an irreducible representation of
$A(u)\rtimes S_u$ then $\Ind_{S_u}^G R$ is irreducible.  Furthermore,
if $R$ and $L$ are both irreducible representations of $A(u)\rtimes
S_u$ and $\Ind_{S_u}^G R$ is equivalent to $\Ind_{S_u}^G L$ then $R$
is equivalent to $L$. 
\end{prop}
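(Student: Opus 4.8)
The plan is to reduce everything to the Hausdorff orbit space case handled in Corollary \ref{cor:2} by exploiting the composition series built in the proof of Theorem \ref{thm:liftstab}. Since $G$ is regular, $G\unit/G$ is almost Hausdorff, so Proposition \ref{prop:7} supplies open $G$-invariant sets $U_\beta = q\inv(V_\beta)$, and exactly as in the proof of Theorem \ref{thm:liftstab} the ideals $I_\beta = \Ex(U_\beta)$ form a composition series for $A\rtimes G$ with the property that $q(U_{\beta+1}\setminus U_\beta) = V_{\beta+1}\setminus V_\beta$ is Hausdorff. The sets $U_{\beta+1}\setminus U_\beta$ partition $G\unit$, so given $u$ there is a unique $\beta$ with $u\in U_{\beta+1}\setminus U_\beta$; I will work entirely within the stratum $G|_{U_{\beta+1}\setminus U_\beta}$, whose orbit space is Hausdorff.

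First I would establish irreducibility. By Lemma \ref{lem:2} the stratum crossed product $A(U_{\beta+1}\setminus U_\beta)\rtimes G|_{U_{\beta+1}\setminus U_\beta}$ is isomorphic to the subquotient $I_{\beta+1}/I_\beta$, and $\Ind_{S_u}^G R$ is the canonical extension to $A\rtimes G$ of $T_R := \Ind_{S_u}^{G|_{U_{\beta+1}\setminus U_\beta}} R$. Since the stratum has Hausdorff orbit space, Corollary \ref{cor:2} says $T_R$ is an irreducible representation of the subquotient. I would then invoke the standard correspondence for an ideal $J$ in a $C^*$-algebra $B$: restriction and extension set up a bijection between the irreducible representations of $J$ and the irreducible representations of $B$ that do not annihilate $J$. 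Applying this first to the ideal $I_{\beta+1}/I_\beta$ in $(A\rtimes G)/I_\beta$ and then lifting through the quotient map shows that the canonical extension of an irreducible representation of $I_{\beta+1}/I_\beta$ is again irreducible. Hence $\Ind_{S_u}^G R$ is irreducible.

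For injectivity, suppose $R$ and $L$ are irreducible representations of $A(u)\rtimes S_u$ with $\Ind_{S_u}^G R \cong \Ind_{S_u}^G L$. Because both are taken over the \emph{same} $u$, they lie in the same stratum, so by Lemma \ref{lem:2} both induced representations are the canonical extensions of $T_R$ and $T_L := \Ind_{S_u}^{G|_{U_{\beta+1}\setminus U_\beta}} L$ respectively. Both extensions annihilate $I_\beta$, so restricting each to $I_{\beta+1}$ and factoring through $I_{\beta+1}/I_\beta$ recovers $T_R$ and $T_L$; equivalence of the extensions therefore forces $T_R \cong T_L$. The injectivity half of Corollary \ref{cor:2}, applied in the stratum, then yields $R\cong L$.

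The main obstacle, and the step deserving the most care, is making the passage through the subquotient rigorous: one must confirm that the ``canonical extension'' appearing in Lemma \ref{lem:2} really is the ideal-theoretic extension, verify that each induced representation annihilates $I_\beta$ but not $I_{\beta+1}$ so that it genuinely lives on the subquotient, and check that the resulting map on spectra is injective. Once the extension--restriction bijection is set up cleanly, both conclusions follow formally from Corollary \ref{cor:2}, and there are no new analytic estimates to perform.
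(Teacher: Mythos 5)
Your proposal is correct and follows essentially the same route as the paper: locate the Hausdorff stratum $U_{\beta+1}\setminus U_\beta$ containing $u$ via the stratification of Proposition \ref{prop:7}, apply Corollary \ref{cor:2} there, and use Lemma \ref{lem:2} together with the standard extension--restriction correspondence for subquotients to transfer irreducibility and injectivity back to $A\rtimes G$. The only cosmetic difference is that the paper identifies the stratum by taking the minimal $\delta$ with $G\cdot u\in V_\delta$ and noting it must be a successor ordinal, whereas you phrase this as the strata partitioning $G\unit$; you also spell out the injectivity half that the paper leaves as ``similar concerns.''
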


\begin{proof}
Suppose $R$ is an irreducible representation of $A(u)\rtimes S_u$.
As before, $G\unit/G$ must be locally Hausdorff so that there exists
$\{V_\beta\}$ as in Proposition \ref{prop:7}.  Consider $\Gamma =
\{\beta \leq \gamma : G\cdot u \in V_\beta\}$.  If $\delta = \min
\Gamma$ is a limit ordinal then $G\cdot u \in \bigcup_{\beta < \delta}
V_\beta$.  However, this is a contradiction since it follows that
$G\cdot u \in V_\beta$ for some $\beta < \delta$.  Therefore,
$\delta$ has an immediate predecessor $\sigma$ and  $G\cdot u \in
V_\delta \setminus V_\sigma$.  Suppose $q : G\unit \rightarrow G\unit/G$
is the quotient map, and let
$U_\delta = q\inv(V_\delta)$ and $U_\sigma = q\inv(V_\sigma)$.  Then
$u\in U_\delta \setminus U_\sigma$ and since  $V_\delta\setminus
V_\sigma$ is Hausdorff it follows from Corollary \ref{cor:2} that
$R' = \Ind_{S_u}^{G|_{U_\delta\setminus U_\sigma}} R$ is irreducible.
Hence the extension of $R'$ to $A\rtimes G$ is irreducible and by
Lemma \ref{lem:2} this is exactly $\Ind_{S_u}^G R$.  The rest of the
proposition follows quickly via similar concerns.  
\end{proof}

\bibliographystyle{amsplain}
\bibliography{references}

\end{document}